\newcommand{\orcid}[1]{\href{https://orcid.org/#1}{\includegraphics[width=8pt]{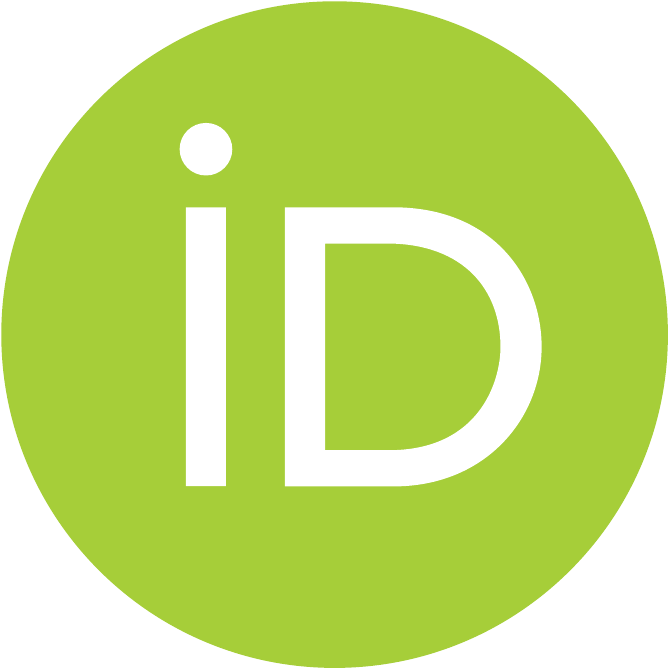}}}
\begin{document}
\title{ Dynamical Model of Mild Atherosclerosis: Applied Mathematical Aspects
}


\author{Debasmita Mukherjee\orcid{0000-0003-1735-0421}         \and
       Sishu Shankar Muni\orcid{0000-0001-9545-8345}   \and Hammed Olawale Fatoyinbo \orcid{0000-0002-6036-2957}  
}



\institute{ D. Mukherjee \at
              School of Mathematics, Applied Statistics \& Analytics\\
SVKM'S NMIMS (Deemed to be) University
Mumbai 400056, India \\
              \email{debasmita.sarada@gmail.com}           
           \and
           S. S. Muni \at
            1.~ Department of Physical Sciences,
Indian Institute of Science and Educational Research Kolkata,
Campus Road, Mohanpur, West Bengal, 741246,
India\\
2.~ School of Mathematical and Computational Sciences,
Massey University,
Colombo Road, Palmerston North, 4410,
New Zealand\\
   \email{sishu1729@iiserkol.ac.in, ssmuni760010@gmail.com} 
              \and
               H. O. Fatoyinbo \at
               1.~ EpiCentre, School of Veterinary Science, Massey University, Palmerston North 4442, New Zealand
             \\2.~ School of Mathematical and Computational Sciences, Massey University, Colombo Road, Palmerston North, 4410,
New Zealand \\ \email{h.fatoyinbo@massey.ac.nz} 
}

\date{Received: date / Accepted: date}

\maketitle

\begin{abstract}
Atherosclerosis is a chronic inflammatory disease occurs due to plaque accumulation in the inner artery wall. In atherosclerotic plaque formation monocytes and macrophages play a significant role in controlling the disease dynamics. In the present article, the entire biochemical process of atherosclerotic plaque formation is presented in terms of an autonomous system of nonlinear ordinary differential equations involving concentrations of oxidized low-density lipoprotein (LDL), monocytes, macrophages, and foam cells as the key dependent variables. To observe the capacity of monocytes and macrophages the model has been reduced to a two-dimensional temporal model using quasi steady state approximation theory. Linear stability analysis of the two-dimensional ordinary differential equations (ODEs) model has revealed the stability of the equilibrium points in the system. We have considered both one- and two- parameter bifurcation analysis with respect to parameters associated to the rate at which macrophages phagocytose oxidised LDL and the rate at which LDL enters into the intima. The bifurcation diagrams reveal the oscillating nature of the curves representing concentration of monocytes and macrophages with respect to significant model parameters. We are able to find the threshold values at which the plaque accumulation accelerates in an uncontrollable way. Further to observe the impact of diffusion, a spatiotemporal model has been developed. Numerical investigation of the partial differential equations (PDEs) model reveals the existence of travelling wave in the system which ensures the fact that the development of atherosclerotic plaque formation follows reaction-diffusion wave.
\keywords{Atherosclerosis \and Macrophage \and Endothelial Shear Stress\and Hopf bifurcation}
 \subclass{92C05 \and 92C17 \and 92C37}
\end{abstract}

\section{Introduction}
Atherosclerosis is a procedure linked with inflammation. It is characterized by the fat-laden foam cells and cellular debris. It is a cardiovascular disease resulting from hypertension, smoking, diabetes, high level of cholesterol, and age. The record suggests that every year nearly 900,000 deaths occur due to atherosclerosis in western countries \citep{hao2014ldl}.

The atherosclerotic plaques can occur in artery walls throughout the body. It usually emerges in the artery branches where there is low endothelial shear stress (ESS), such as at junctions and curves \citep{chatzizisis2007role}. Endothelial cells (ECs) tend to change their morphology under the influence of wall shear stress (WSS). Under high wall shear stress, the shape of the ECs is elongated. Whereas, under low wall shear stress, the ECs shape turns to be polygon or round without any definite alignment pattern. The morphological changes of ECs along with the low ESS may be the reason for the expansion of the junctions between ECs. These little "gaps" between ECs, with flow stagnation and subsequent extension of the residence time of circulating low-density lipoprotein (LDL) facilitate sub-endothelial infiltration of LDL. Inside the vessel walls, LDL particles get oxidized by free radicals, which are released by the damaged endothelium. The oxidized LDL stimulates ECs to express adhesion molecules, for example vascular cell adhesion molecule I (VCAM-I). The chemoattractant and VCAM-I attracts monocytes, and T-cells from the lumen \citep{chatzizisis2007role}. 
 \begin{figure}[htbp]
   \centering
   \includegraphics[scale=0.4]{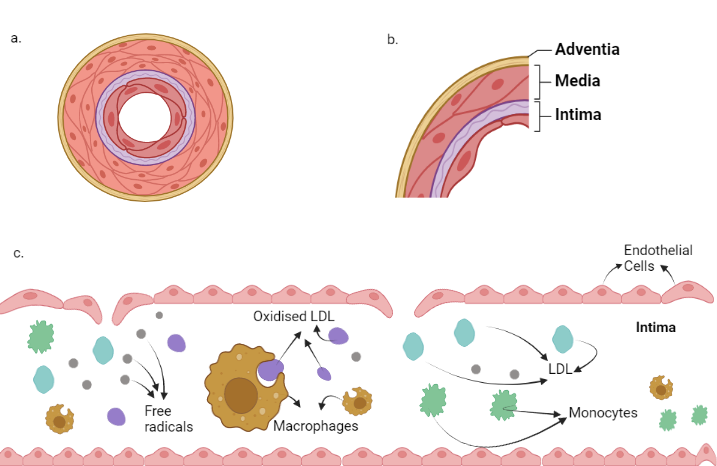}
   \caption{a. Cross section of an artery. b. Artery layers. c. Schematic diagram of atherosclerotic plaque formation}
    \label{fig:intro1}
  \end{figure}
Inside the intima, monocytes convert into macrophages in the presence of scavenger receptors. The macrophages digest (or phagocytose) oxidized LDL particles, and together they form the foam cells. The cycle of monocyte recruitment to foam cell formation continues, and it gradually increases the volume of the plaque. Once the plaque is formed, a fibrous cap is generated surrounding the plaque. The plaque volume and the WSS of the arterial wall are proportional to each other. A gradual increase in the plaque volume escalates the WSS of the arterial wall. Eventually, WSS reaches its threshold and breaks down. Then the atherosclerotic plaque bulges into the lumen and causes hindrances in the smooth blood flow. It leads to thrombosis in that inflammated region, and in the long term it may result in a serious myocardial infection \citep{libby2002inflammation}. This entire biochemical process is presented in Fig.~\ref{fig:intro1}. Fig.~\ref{fig:intro1} is created in \href{https://biorender.com/}{biorender}.

Atherosclerosis involves several cellular interactions of which macrophage phagocytosis plays a significant role in the plaque, formation process \citep{mukherjee2019reaction}. Macrophages absorb and remove pathogenic material inside the body through a process known as phagocytosis. In atherosclerotic plaque formation macrophages ingest and remove oxidized LDL. Macrophages also absorb apoptotic material and necrotic material, including cellular debris and free lipids \citep{moore2013macrophages}. Cells that undergo apoptosis, sends out chemokines to attract macrophages (for "find- me" signals) and then produce "eat - me" receptors on their cell membrane. This accelerates the ingestion process by macrophages. Similarly, macrophages absorb oxidized LDL via a scavenger receptor. It has been observed that necrotic materials are toxic to macrophages and hence do not facilitate the phagocytosis process \citep{kojima2017role}.

Over the past few years, several clinical investigations in terms of mathematical and statistical models are analysed to gain a clear insight of the disease dynamics. Till date the complex biological process behind atherosclerosis is partly understood. Analysis of some recent mathematical models can be found in \citep{zohdi2004phenomenological,el2007atherosclerosis,fok2012mathematical,bulelzai2012long,hao2014ldl,chalmers2015bifurcation,friedman2015mathematical,anlamlert2017modeling, mukherjee2020dynamical,mukherjee2022dynamical}.

\cite{bulelzai2012long} described two-dimensional ODE model of atherosclerosis involving oxidized LDL, monocytes, macrophages, and foam cells. They have done a comparative study between two models treating WSS as a parameter in one model and as a time-dependent function in the other model. They have shown that the drastic decrease in lumen radius is quite sensitive to the value of WSS. \cite{bulelzai2014bifurcation} further investigated the progression of atherosclerosis via numerical bifurcation analysis of the two models described in \citep{bulelzai2012long}.

The purpose of this paper is to give in-depth investigation of monocyte and macrophage proliferation in atherosclerotic plaque forming process. We begin in Sect.~\ref{s1} with the model of \cite{bulelzai2014bifurcation} for atherosclerotic plaque evolution. In the current investigation oxidised LDL has been assumed to be in steady state condition. We apply quasi-steady state approximation (QSSA) to reduce the model to a two-dimensional ODE system in Sect.~\ref{ss1}. Usually the development of atherosclerosis follows a reaction-diffusion process \citep{el2012reaction, chalmers2015bifurcation}. To capture this natural phenomenon the reduced model is extended to a reaction-diffusion systems in Sect.~\ref{ss2}.

Then in Sect.~\ref{s2} we analyse the dynamics of the reduced ODE model. We find the properties of the equilibrium points of the systems via linear stability analysis in \ref{ss22}. A detailed numerical bifurcation analysis is carried out in Sect.~\ref{sec:bifurcation}. In Sect.~\ref{s3} the spatiotemporal dynamics is discussed and the physical interpretation of results is provided in Sect.~\ref{s4}. Finally, conclusions are presented in Sect.~\ref{s5}. 



\section{Model formulation} \label{s1}
Macrophage phagocytosis is a fundamental function in atherosclerotic plaque formation. The main objective of this paper is to investigate the role of monocytes and macrophages in phagocytosis and hence in evolution of atherosclerotic plaques. At first, we have  considered the same model as in \citep{bulelzai2014bifurcation}
\begin{eqnarray}
     \dot{m} &=& (\frac{aL}{(1+\sigma)(1+L)} - \epsilon - c)m,\label{eq:1}\\
     \dot{M} &=& cm - \frac{bML}{1+L},\label{eq:2}\\
     \dot{L} &=& \frac{dm}{f+m} - eLM -L, \label{eq:3}\\
     \dot{F} &=& \frac{bLM}{1+L} \label{eq:4},
 \end{eqnarray}
 where $m$, $M$, $L$, and $F$ denote the concentration of monocytes, macrophages, oxidized LDL, and foam cells respectively. All parameters of this model are dimensionless and non-negative. Physical interpretation of this system \eqref{eq:1}-\eqref{eq:4} is the same as in \citep{bulelzai2014bifurcation}. As equation \eqref{eq:4} decouples the above system so in \citep{bulelzai2014bifurcation}, the authors have only considered the evolution equations of $m, M $, and $L$. We have also considered the model constituting from the equations \eqref{eq:1}-\eqref{eq:3} for our investigation.
 
 \subsection{\textbf{Model reduction}}\label{ss1}
 
 Quasi-steady state approximation (QSSA) theory is a mathematical method for streamlining Differential Equations. It is one of the poignant appliances available for studying several complex biological models. For example, Zhang et al. (\citeyear{zhang2013foam}) have used QSSA  to reduce ODE models of atherosclerosis. We have also applied QSSA to investigate the nature of monocytes and macrophages at the steady state value of LDL (L).
 
  The time series plot in Fig.~\ref{fig4} depicts that the concentrations of monocytes and macrophages are evolving at a larger scale in comparison with oxidized LDL. Thus the model in \eqref{eq:1}-\eqref{eq:3} can be reduced into a two-species model involving monocytes and macrophages as the dependent variable while oxidized LDL is assumed to be in steady state.
 \begin{table}
\centering
\caption{List of parameter values used in the model}
\scalebox{0.7}{%
\begin{tabular}{|c|c|c|c|}
\hline
Parameters & Description & Numeric values & Source\\
\hline
$a$ & rate of monocyte insertion inside the intima & 1& \cite{bulelzai2014bifurcation}\\ 
 \hline
 $b$ & rate at which macrophage phagocytose oxidized LDL & 0.2 & present study\\ 
 \hline
 $c$ & rate at which monocytes differentiate into macrophages & 0.05 & \cite{bulelzai2014bifurcation}\\ 
 \hline
 $d$ & rate of total flux of LDL cholesterol entering into
the plaque region & 3 & present study\\ 
 \hline
 $e$ & rate of ingestion of oxidized LDL by macrophages & 1 & \cite{bulelzai2014bifurcation}\\ 
 \hline
 $f$ & saturation constant & 1 & \cite{bulelzai2014bifurcation}\\ 
 \hline
 $\epsilon$ & death rate of monocytes & 0.01 & \cite{bulelzai2014bifurcation}\\ 
 \hline
 $\sigma$ & wall shear stress & 1 & \cite{bulelzai2014bifurcation}\\ 
 \hline
\end{tabular}
}
\label{table:t1}
\end{table}

  \begin{figure*}[htbp]
   \hspace*{-2cm}
\centering
  \begin{subfigure}[b]{.5\linewidth}
    \centering
    \caption{}
   \includegraphics[width=\textwidth]{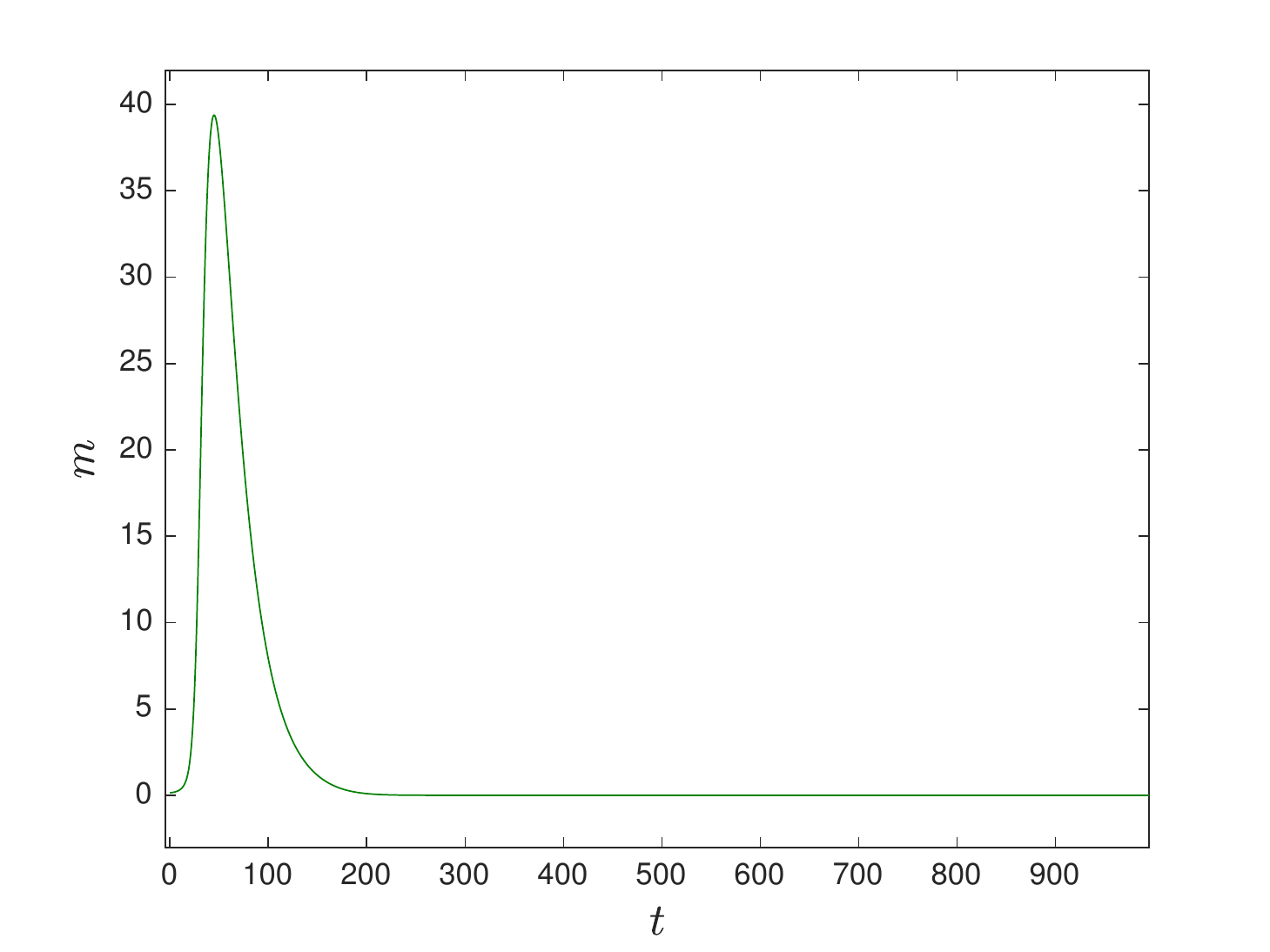}
    \label{fig:m0251}
  \end{subfigure}%
  \begin{subfigure}[b]{.5\linewidth}
    \centering
   \caption{}
   \includegraphics[width=\textwidth]{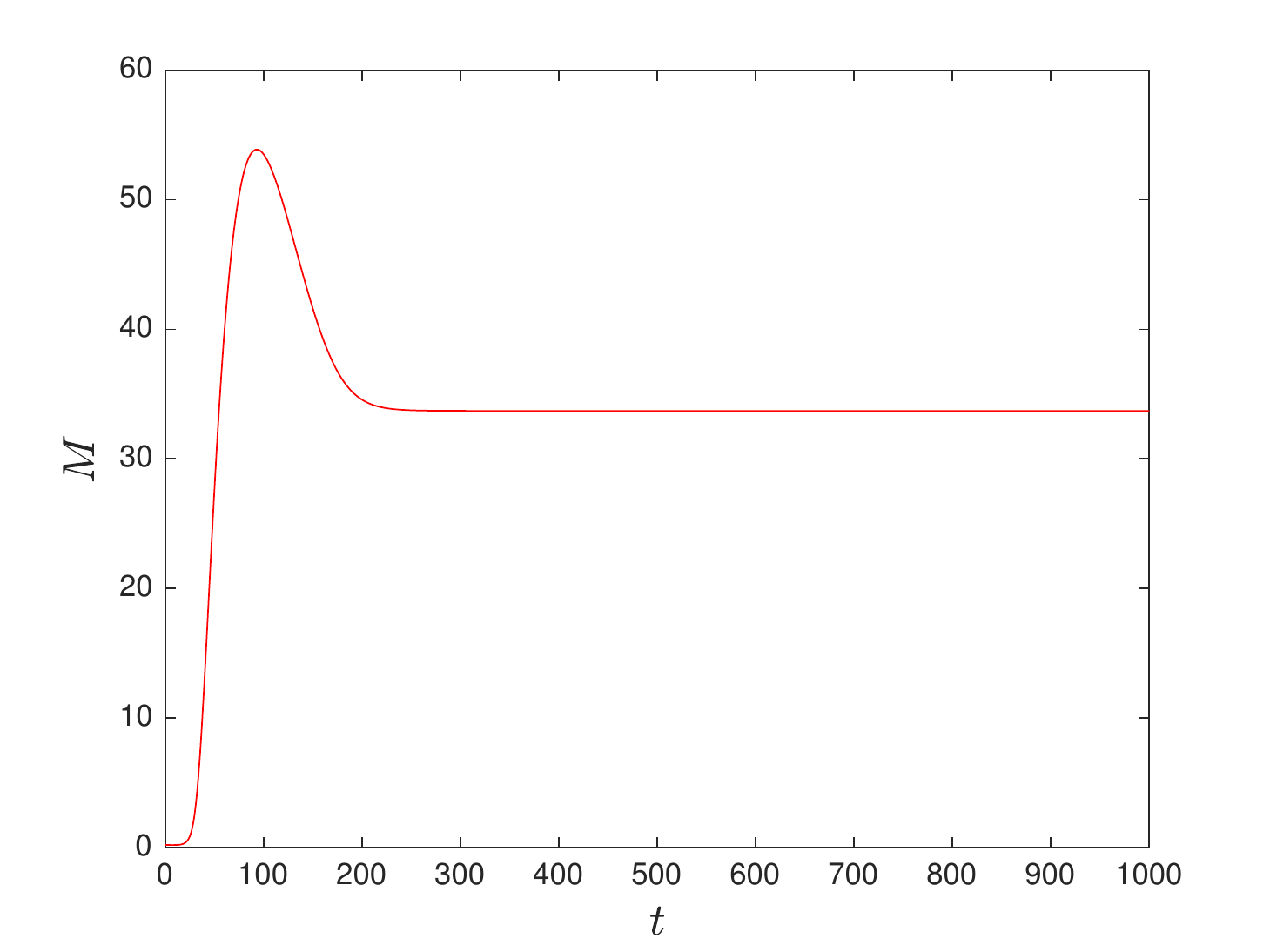}
   \label{fig:M0251}
  \end{subfigure}\\%
  \begin{subfigure}[b]{.5\linewidth}
    \centering
   \caption{}
   \includegraphics[width=\textwidth]{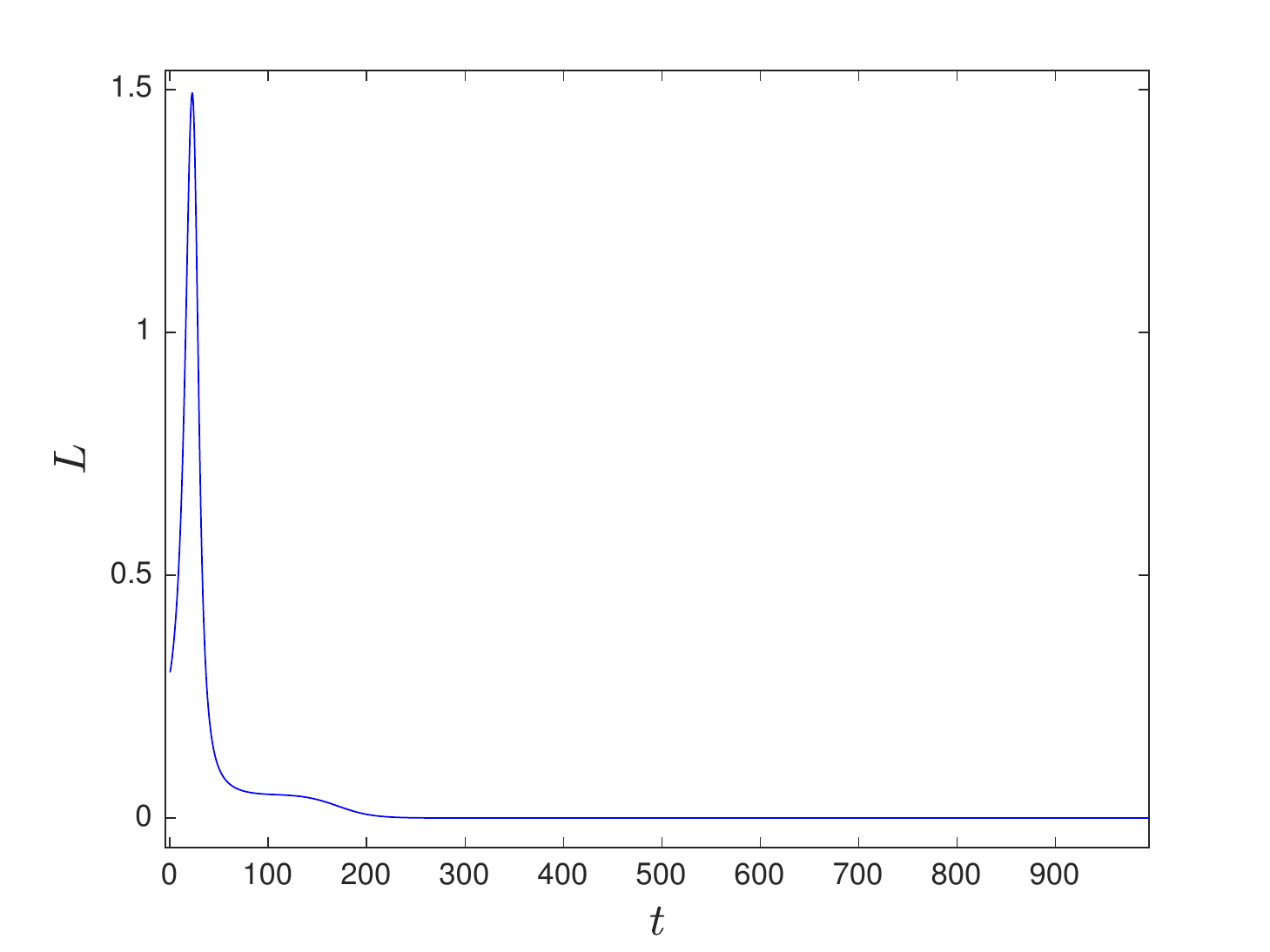}
   \label{fig:PP0251}
  \end{subfigure}%
  \caption{Time series plot of the state variables $m,M,L$. The plot shows that the state variable $L$ is a slow variable. The parameters of the system \eqref{eq:1}-\eqref{eq:4} are set as $d = 3, a = 1, \sigma = 1, \epsilon = 0.01, c = 0.05, f = 1, e = 1, b = 0.2$.\textcolor{black}{The parameter values of the system  \eqref{eq:1}-\eqref{eq:4} are provided in Table \ref{table:t1}} Initial conditions set as $m= 0.14, M= 0.2, L= 0.3$}
  \label{fig4}
  \end{figure*}
  
The quasi steady state value of L is found to be
\begin{equation*}
    L_s= {\frac {dm}{ \left( f+m \right)  \left( Me+1 \right) }},
\end{equation*}
replacing $L$ by $L_s$ in model equation \eqref{eq:1} and \eqref{eq:2}, we obtain:
\begin{eqnarray}
\frac{dm}{dt}&=& \left( {\frac {adm}{ \left( f+m \right)  \left( Me+1 \right)  \left( 
1+\sigma \right) } \left( 1+{\frac {dm}{ \left( f+m \right)  \left( Me
+1 \right) }} \right) ^{-1}}-\epsilon-c \right) m, \label{eq:5}\\
\frac{dM}{dt}&=& cm-{\frac {bMdm}{ \left( f+m \right)  \left( Me+1 \right) } \left( 1+{
\frac {dm}{ \left( f+m \right)  \left( Me+1 \right) }} \right) ^{-1}}. \label{eq:6}
\end{eqnarray}

\subsection{\textbf{PDE formulation of the model}}\label{ss2}
In order to capture the diffusivity of monocytes and macrophages, we have considered a one-dimensional cartesian domain representing a cross-section of the intima of the affected artery away from the edges of the plaque. This domain is bounded by the endothelium at $x=0$ and by the Internal elastic lamina $x=L$. So, the dependent variables $m$ and $M$ are functions of position $x \in [0,L]$ and time $t \geq 0$. We denote the concentration gradient of monocytes and macrophages by $m(x,t)$, and $M(x,t)$ respectively.

Following is a simplified reaction-diffusion model on an interval $ \Omega =[0,L] \subseteq \mathbb{R}$, where we have assumed that monocytes and macrophages are diffusing according to Fick's law in $\Omega$,

\begin{eqnarray}
\frac{\partial m}{\partial t}&=&D_{1}\frac{\partial^2 m}{\partial x^2}+ m\left( -\epsilon-c\right)\notag\\
&&+{\frac {adm^2}{ \left( f+m \right)  \left( Me+1 \right)  \left( 
1+\sigma \right) } \left( 1+{\frac {dm}{ \left( f+m \right)  \left( Me
+1 \right) }} \right) ^{-1}} ,\label{eq:m}\\
\frac{\partial M}{\partial t}&=&D_{2}\frac{\partial^2 M}{\partial x^2}+cm-{\frac {bMdm}{ \left( f+m \right)  \left( Me+1 \right) } \left( 1+{
\frac {dm}{ \left( f+m \right)  \left( Me+1 \right) }} \right) ^{-1}}, \label{eq:M}
\end{eqnarray}

 where $D_{1}$ and $D_{2}$ are diffusion constants. The initial conditions are 
\begin{align}
    m(x,0)=m_{0}(x)~\text{and}~M(x,0)=M_{0}(x)~\forall x\in\Omega ,
\end{align}
for some given functions $m_{0}(x)$ and $M_{0}(x)$. To visualize some self-organised patterns zero flux boundary conditions 

$$\frac{\partial m}{\partial n}= \frac{\partial M}{\partial n} =0,$$ in $\partial \Omega \times (0, \infty )$, where $n$ is the outward normal vector of the boundary $\partial \Omega$, which is assumed to be smooth, is assumed. Biologically it means that monocytes and macrophages do not leave the domain during the time of investigation and hence pointing towards the choice of zero-flux boundary condition. All the parameters are chosen to be dimensionless in the above \eqref{eq:m}-\eqref{eq:M} model.
 
\section{Dynamics of the reduced ODE model} \label{s2}
To investigate how the dynamics of the system \eqref{eq:5}-\eqref{eq:6} evolves with time via divergence we have performed the following analysis. If the divergence is zero, it is referred to as a conservative system. If the divergence is negative, the system is said to be dissipative and in this case, it can support the existence of attractors. For a positive divergence, the system increases in volume in phase space and might diverge.

\textcolor{black}{For system \eqref{eq:5}-\eqref{eq:6}, the divergence is given by
\begin{equation}
 \frac{\partial{\dot{m}}}{\partial{m}} + \frac{\partial{\dot{M}}}{\partial{M}} = -\epsilon - c + \frac{adf(1+\sigma)(Me+1)}{(1+\sigma)^2 ((f+m)(Me+1)+dm)^2} - \frac{bd((f+m)m + (f+m)dm^2)}{((f+m)(Me+1) + dm)^2}.
 \label{eq:divergence}
\end{equation}
Observe that the divergence of the system depends on the parameters and state variables of the system. To evaluate the divergence, we can compute a set of local divergence along the trajectory of the phase space for a particular set of parameter values and obtain the average over time. For the parameters chosen as $a=1, b=0.318, c=0.05, d = 0.9, e = 1,f = 1, \epsilon = 0.01, \sigma = 1$, we observe that the divergence of the system is $ -0.6886<0$, see Fig.\ref{fig:1pard=3}. From this, we conclude that the system is dissipative and can support attractors. } It represents that the system has multiple steady states, and hence it depicts bistability.

 \begin{figure}[htbp]
   \centering
    \includegraphics[width=0.6\textwidth]{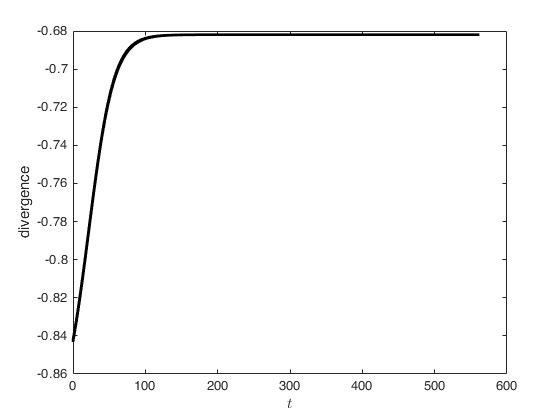}
    \caption{Evolution of the local divergence with time for system \eqref{eq:5}-\eqref{eq:6}. The parameters are chosen as $a=1, b=0.318, c=0.05, d = 0.9, e = 1,f = 1, \epsilon = 0.01, \sigma = 1$ }
    \label{fig:divtime}
  \end{figure}
  
 \begin{figure*}[htbp]
\centering
  \begin{subfigure}[b]{.5\linewidth}
    \centering
    \caption{}
   \includegraphics[width=\textwidth]{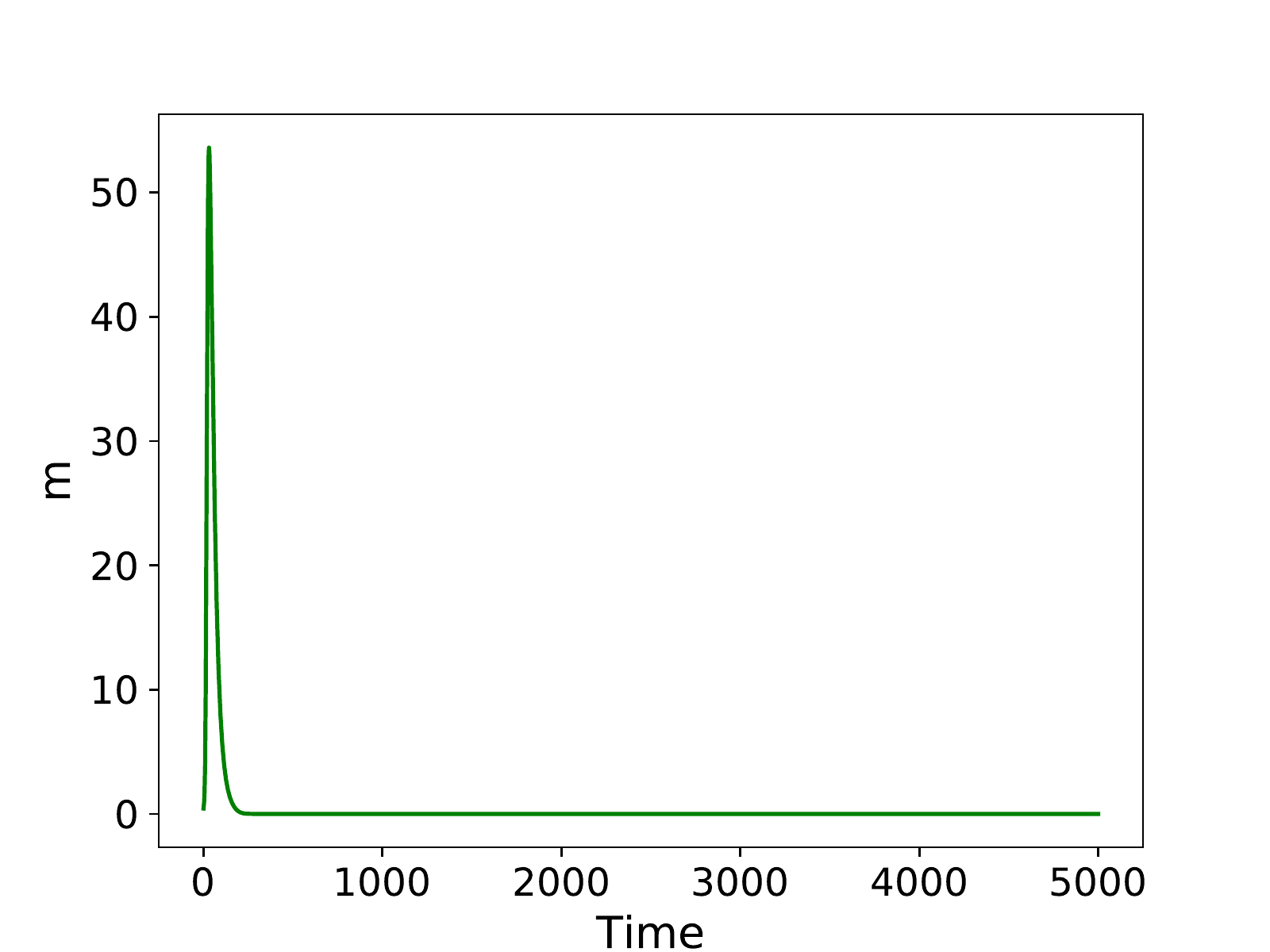}
    \label{fig:m025}
  \end{subfigure}%
  \begin{subfigure}[b]{.5\linewidth}
    \centering
   \caption{}
   \includegraphics[width=\textwidth]{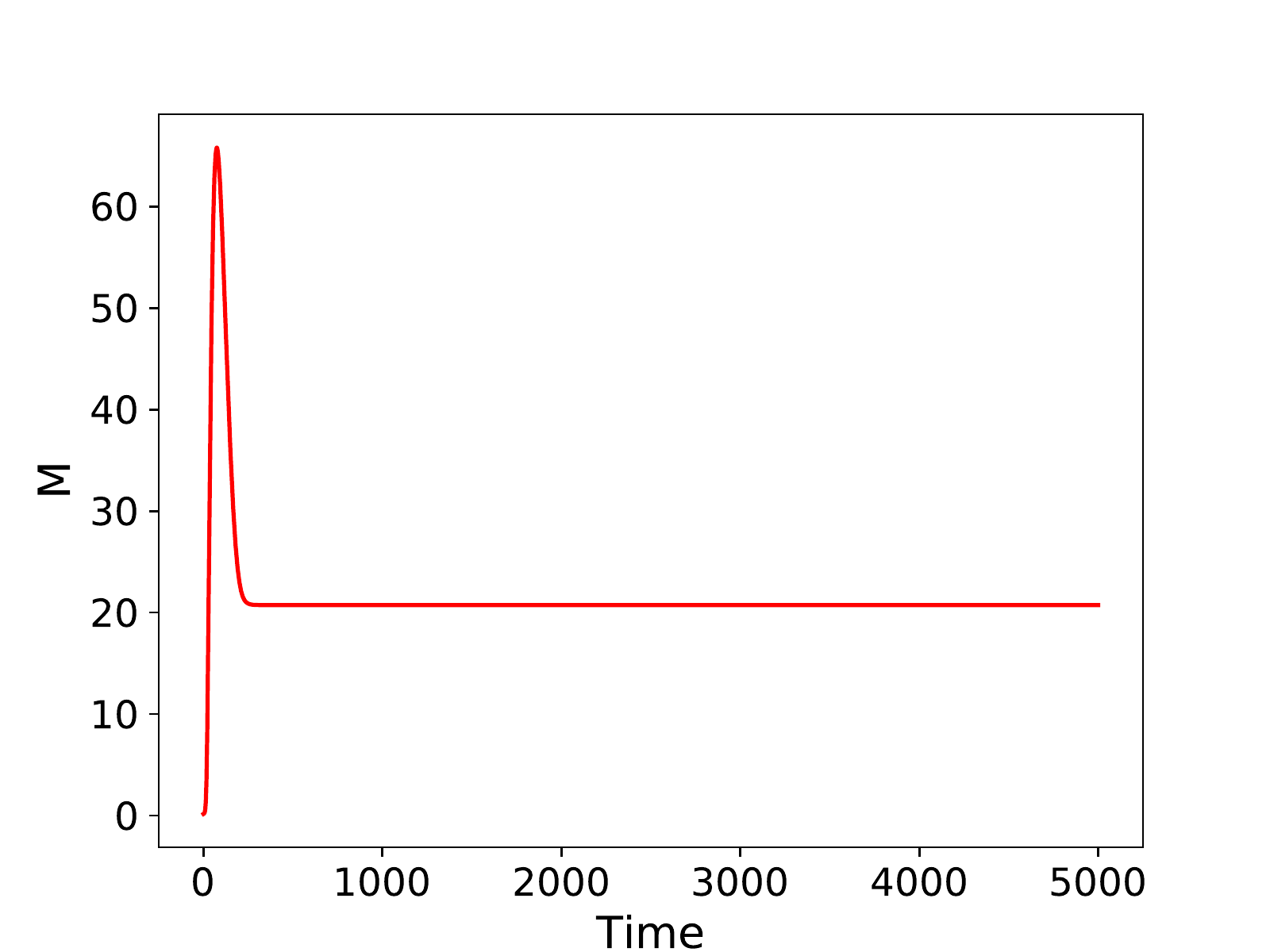}
   \label{fig:M025}
  \end{subfigure}%
  \begin{subfigure}[b]{.5\linewidth}
    \centering
   \caption{}
   \includegraphics[width=\textwidth]{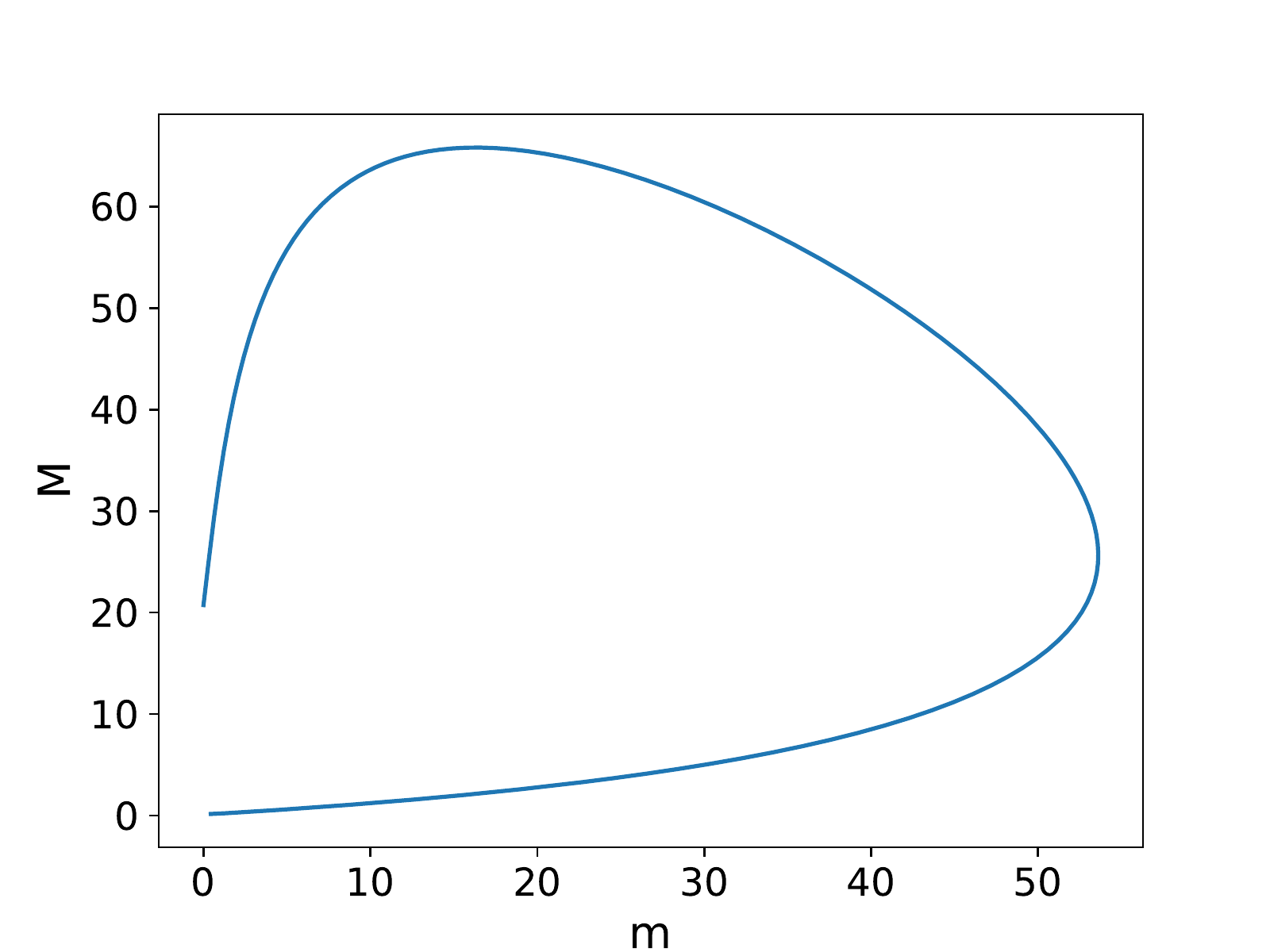}
   \label{fig:PP025}
  \end{subfigure}\\%
  \begin{subfigure}[b]{.5\linewidth}
    \centering
   \caption{}
   \includegraphics[width=\textwidth]{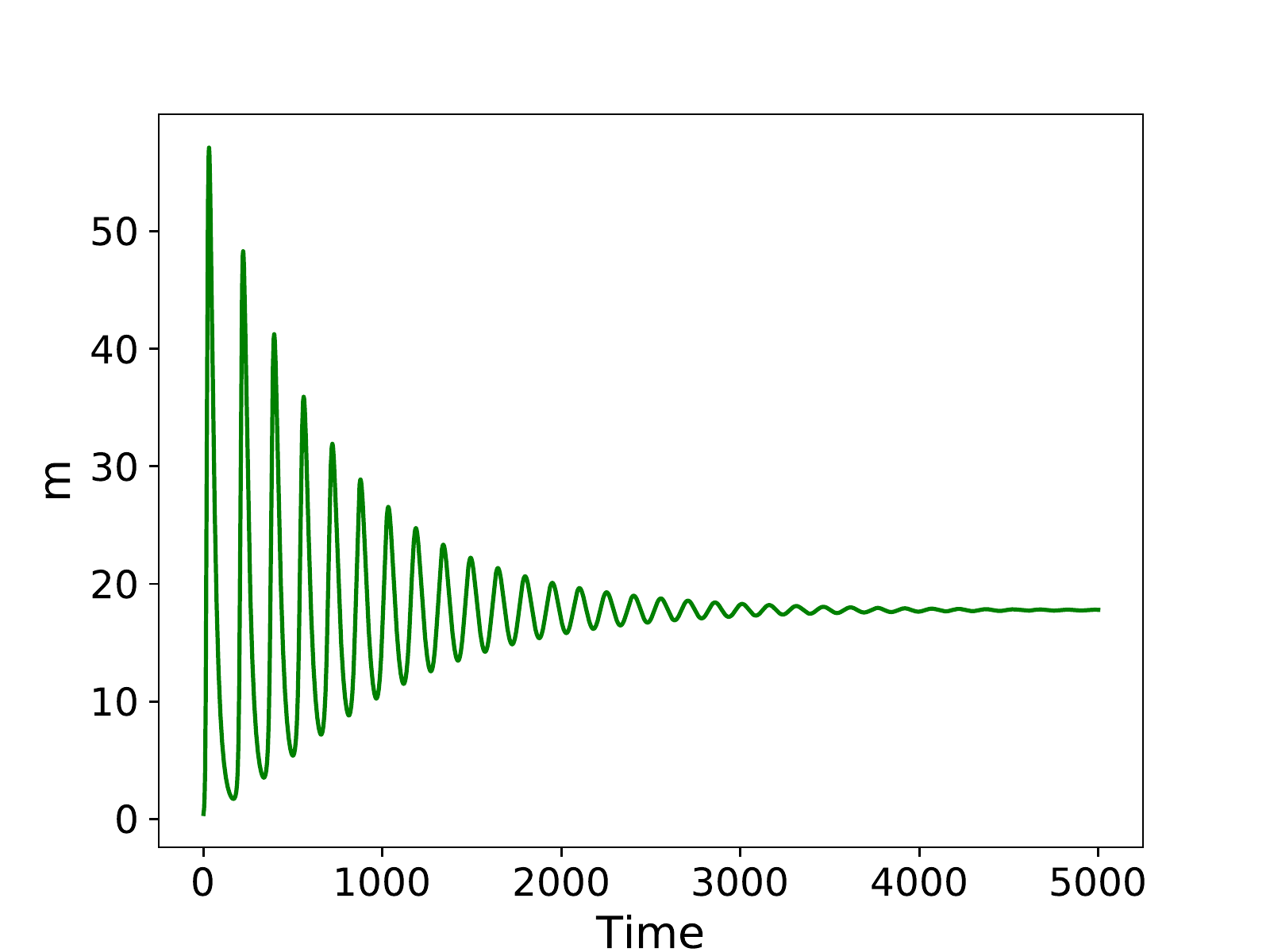}
   \label{fig:m03}
  \end{subfigure}%
  \begin{subfigure}[b]{.5\linewidth}
    \centering
   \caption{}
   \includegraphics[width=\textwidth]{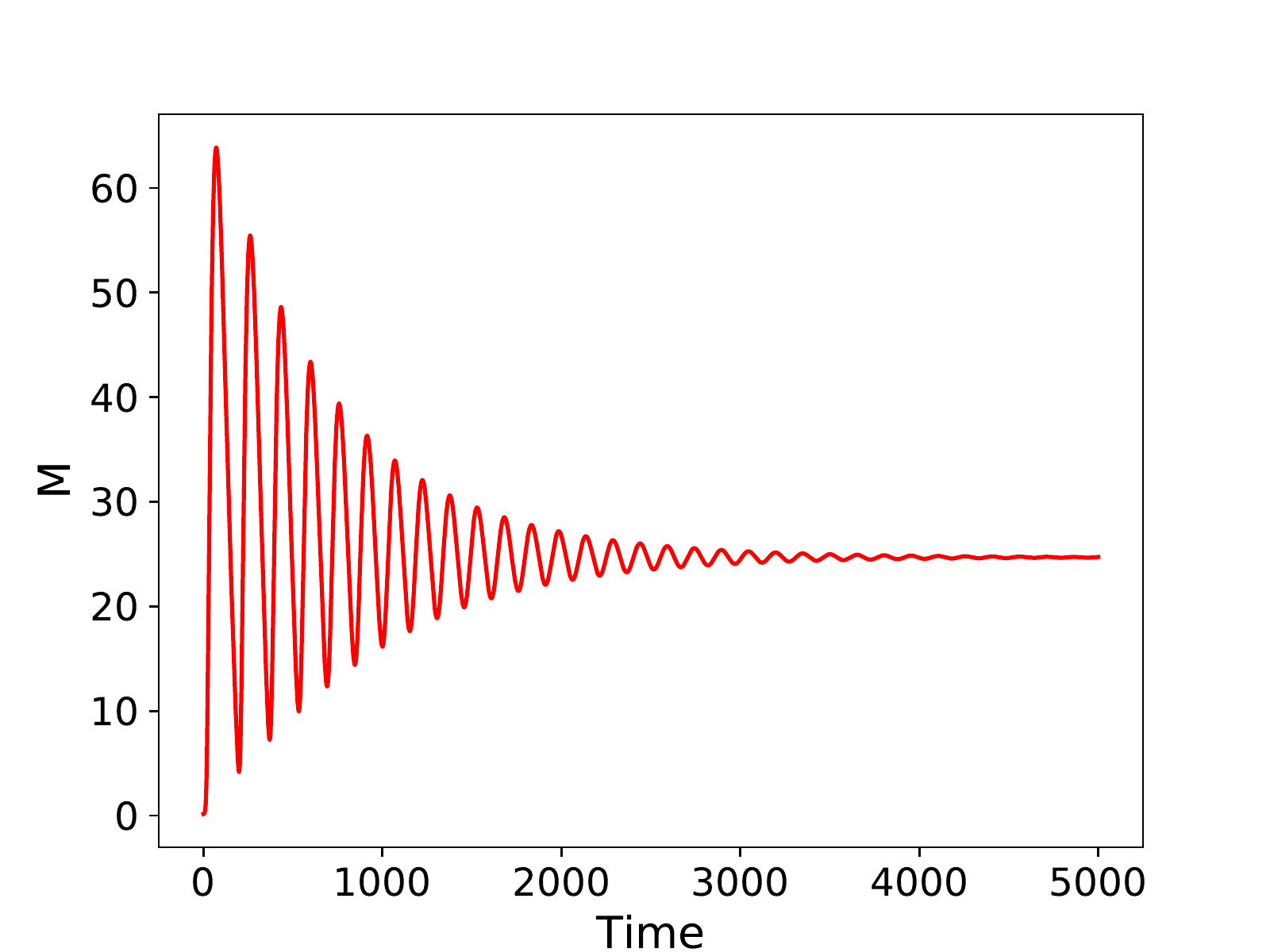}
   \label{fig:M03}
  \end{subfigure}%
  \begin{subfigure}[b]{.5\linewidth}
    \centering
    \caption{}
    \includegraphics[width=\textwidth]{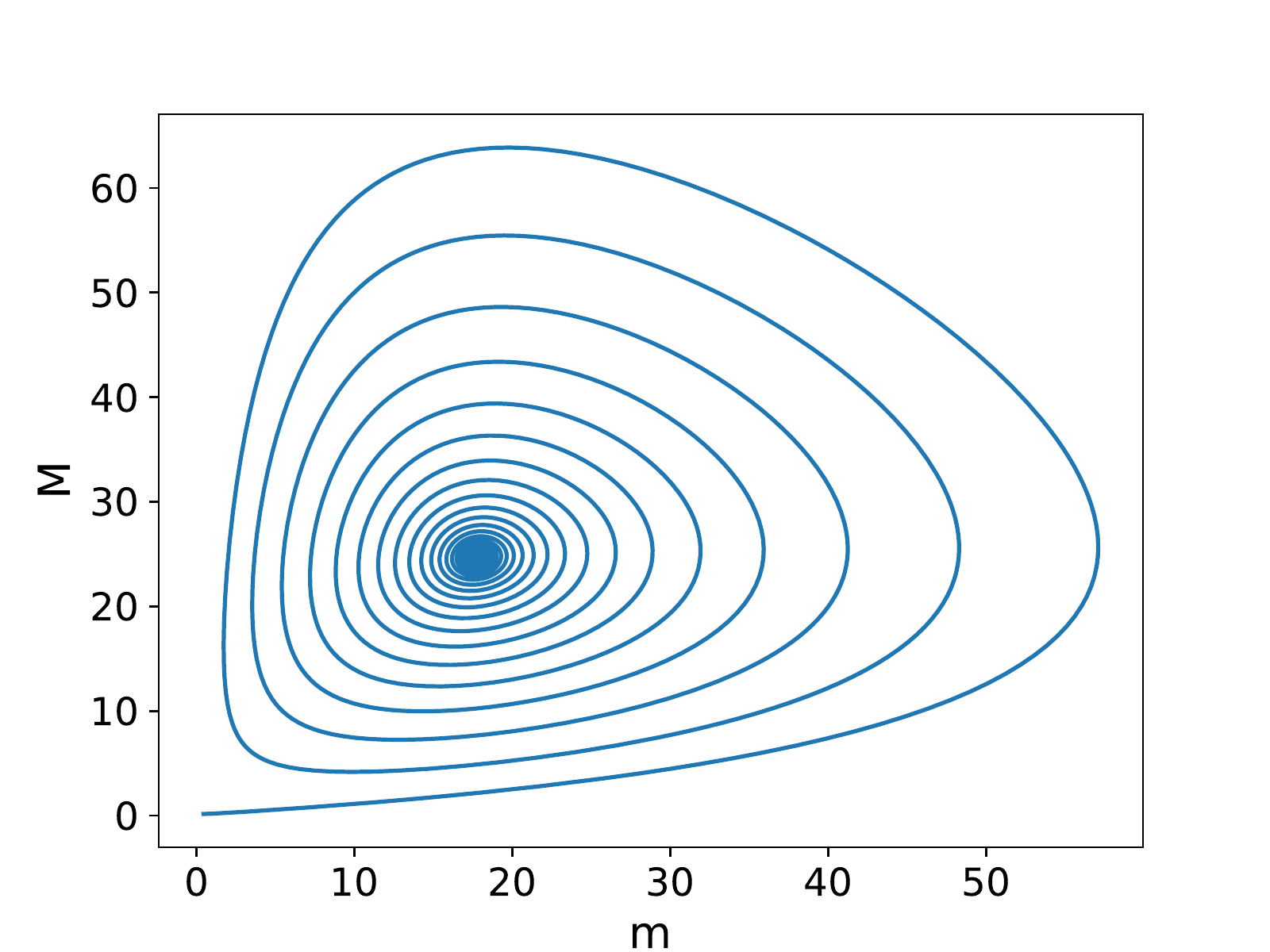}
    \label{fig:PP03}
  \end{subfigure}\\%
  \begin{subfigure}[b]{.5\linewidth}
    \centering
    \caption{}
    \includegraphics[width=\textwidth]{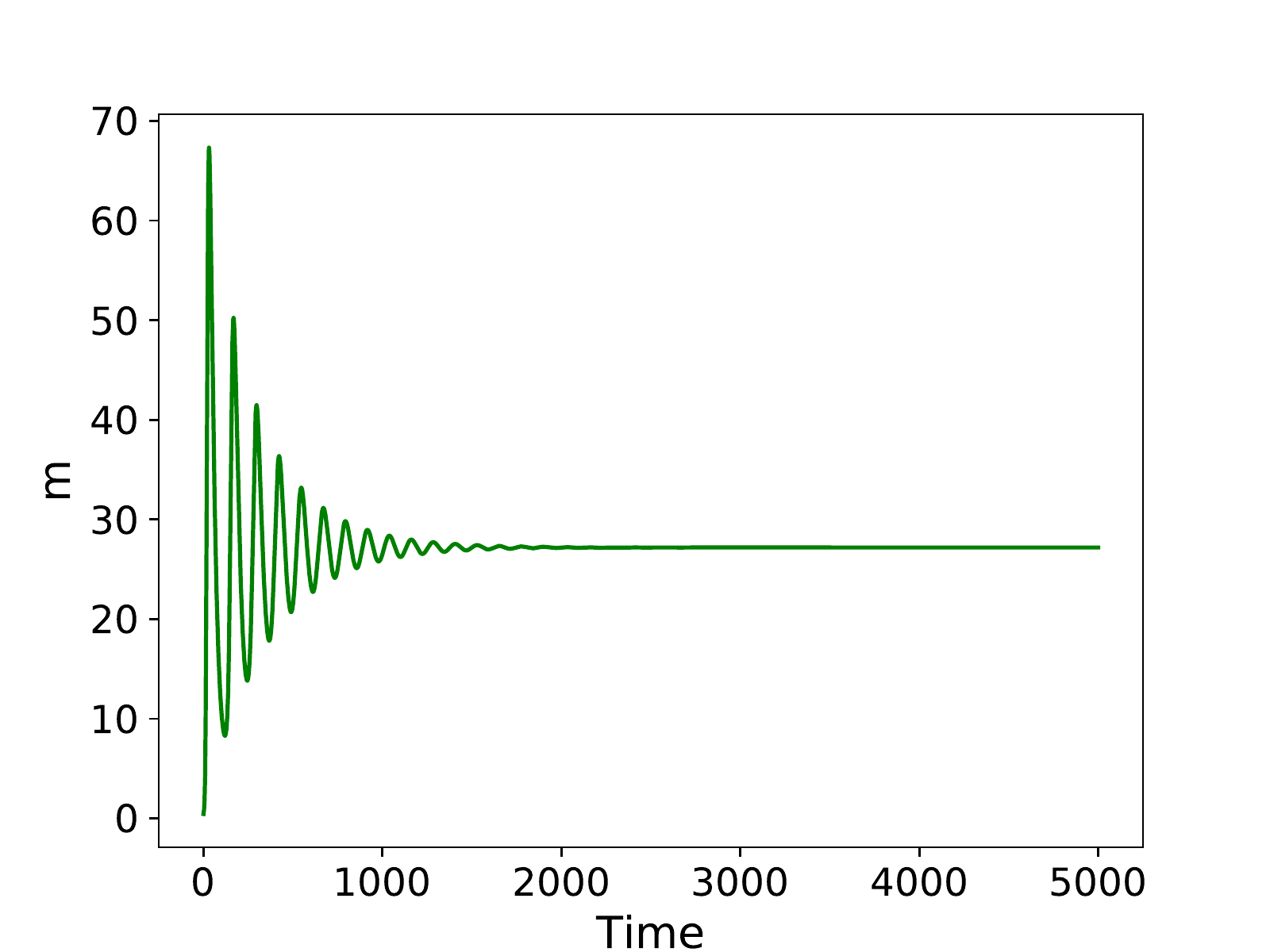}
    \label{fig:m045}
  \end{subfigure}%
  \begin{subfigure}[b]{.5\linewidth}
    \centering
   \caption{}
   \includegraphics[width=\textwidth]{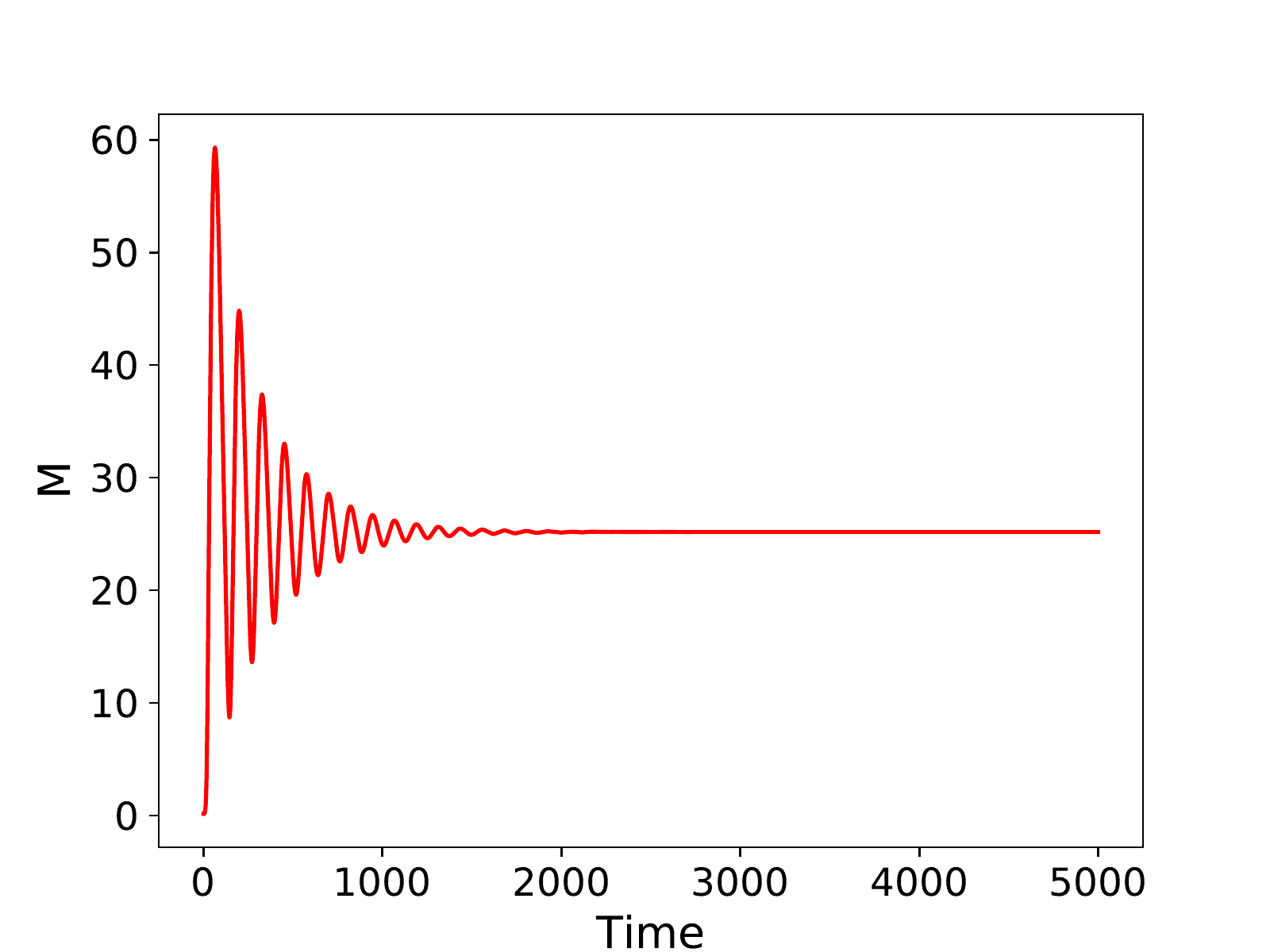}
   \label{fig:M045}
  \end{subfigure}%
  \begin{subfigure}[b]{.5\linewidth}
    \centering
    \caption{}
    \includegraphics[width=\textwidth]{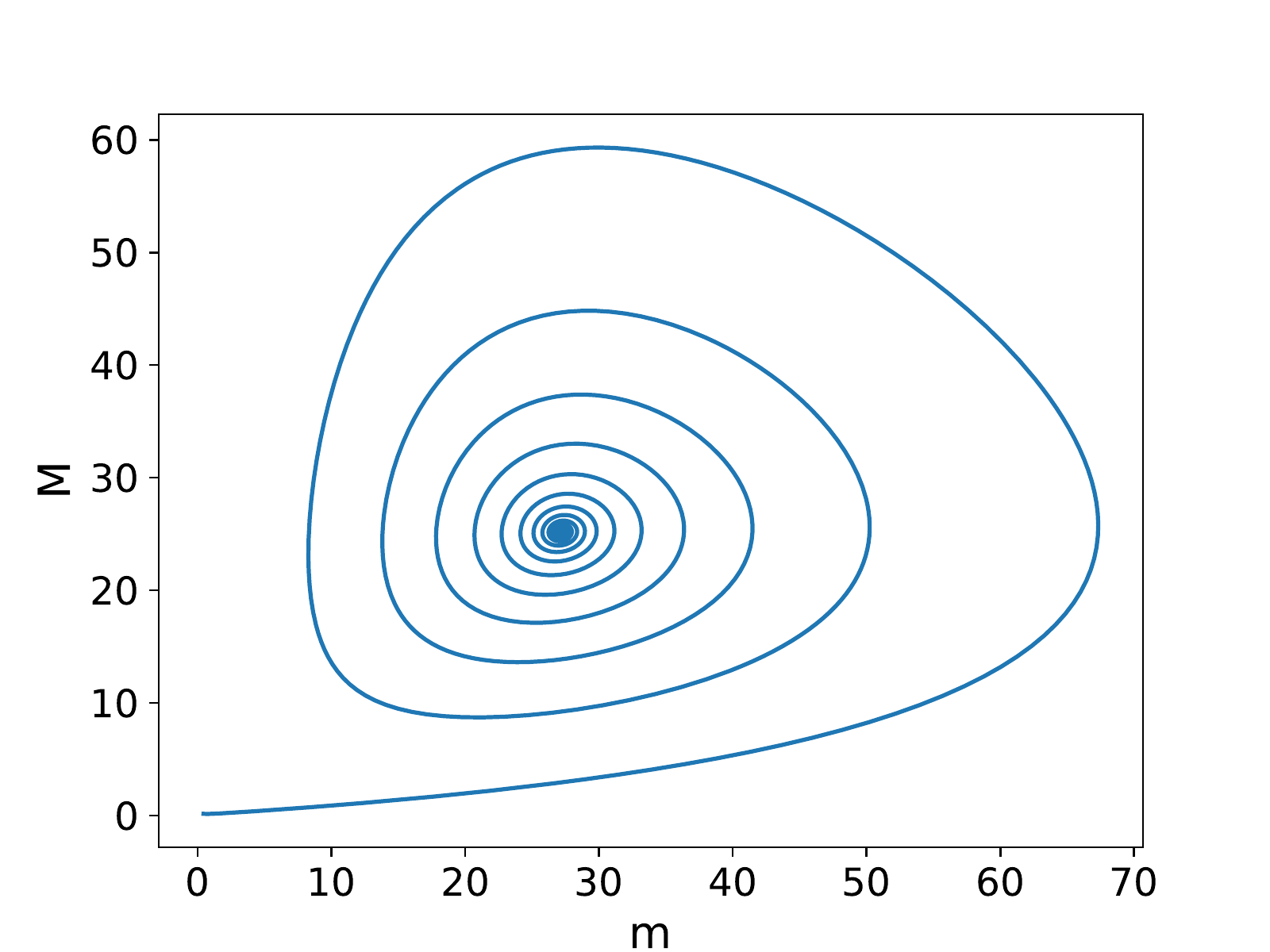}
    \label{fig:pp045}
  \end{subfigure}\\%
    \begin{subfigure}[b]{.5\linewidth}
    \centering
    \caption{}
   \includegraphics[width=\textwidth]{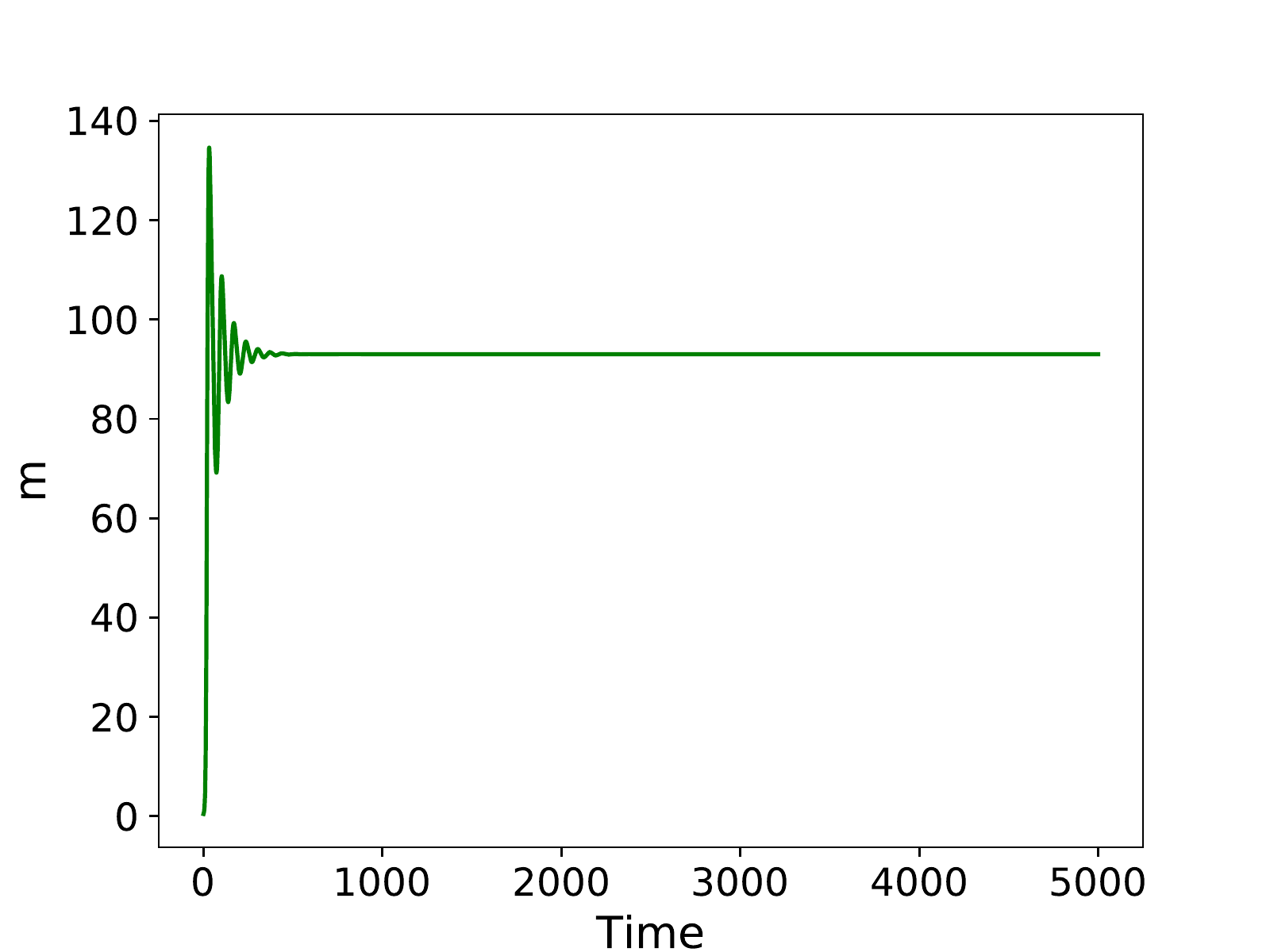}
    \label{fig:m15}
  \end{subfigure}%
   \begin{subfigure}[b]{.5\linewidth}
    \centering
    \caption{}
   \includegraphics[width=\textwidth]{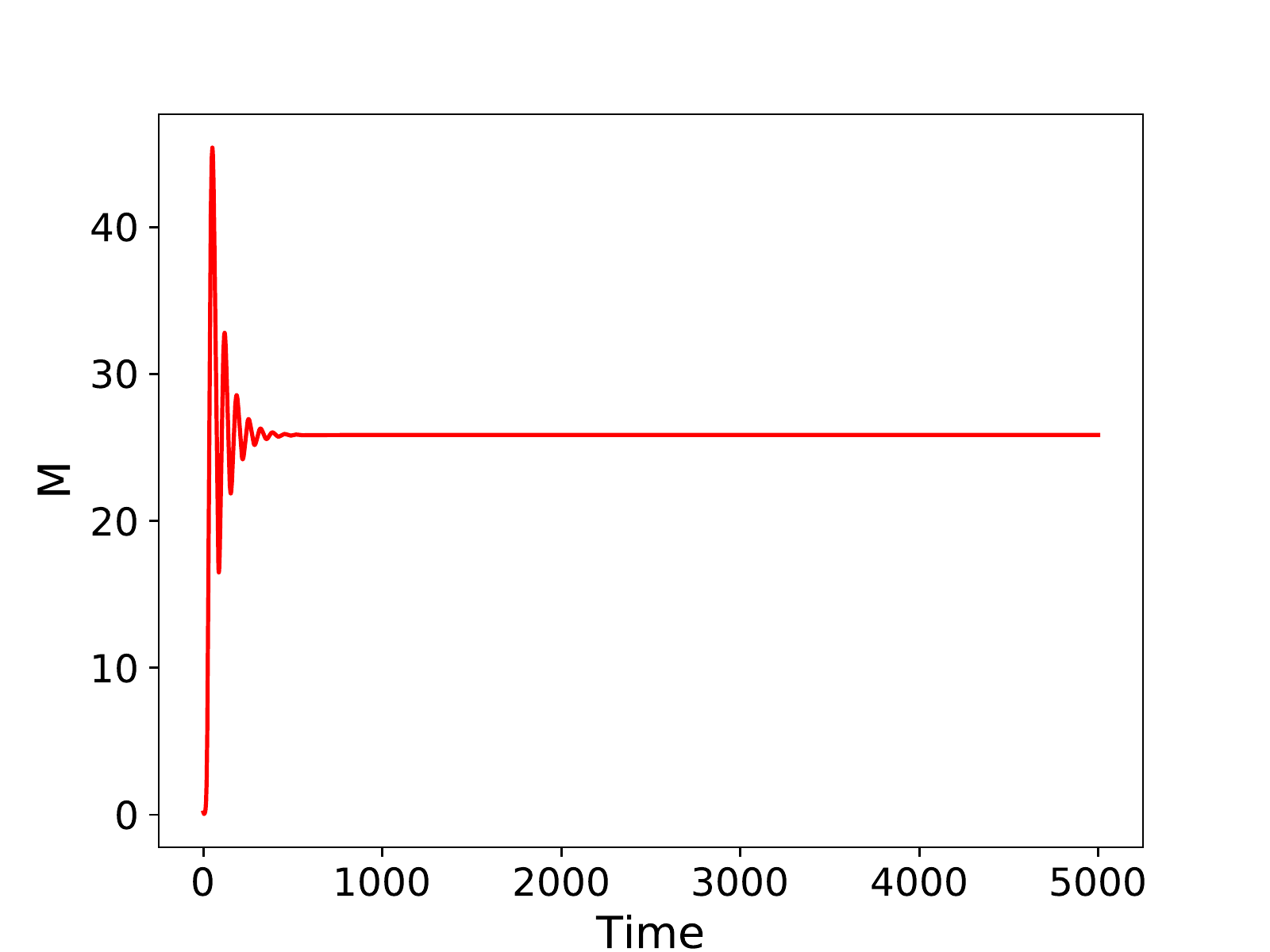}
    \label{fig:M15}
  \end{subfigure}%
  \begin{subfigure}[b]{.5\linewidth}
    \centering
    \caption{}
    \includegraphics[width=\textwidth]{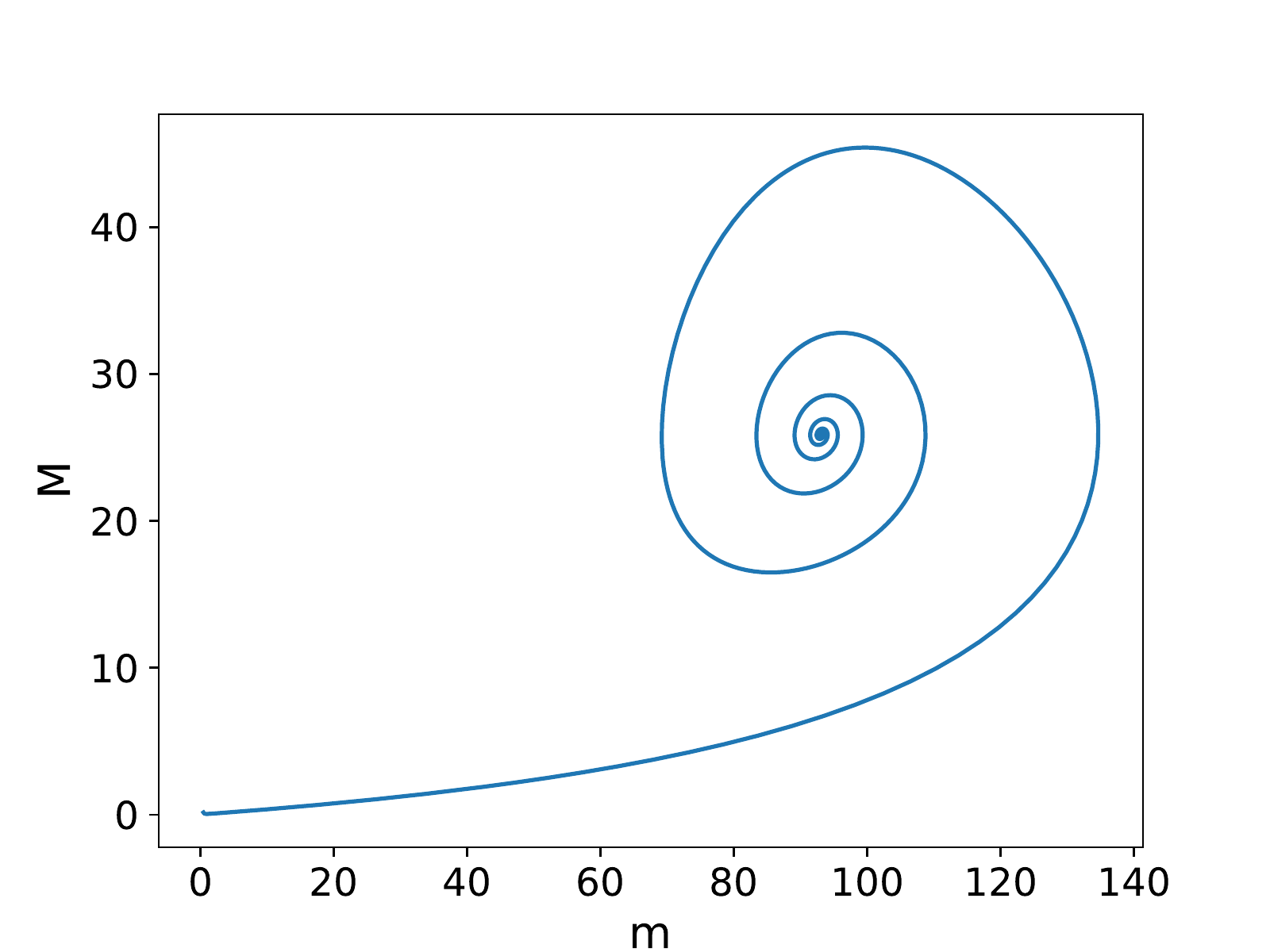}
    \label{fig:PP15}
  \end{subfigure}%
 \caption{Time series plots of monocyte (m) concentration and macrophage concentration (M) for several values of b:  (a)-(b) $b=0.25$; (d)-(e) $b=0.3$; (g)-(h) $b=0.45$; (j)-(k) $b=1.5$ while keeping all other parameter values same as in Table \ref{table:t1}. Their corresponding phase plane are (c), (f), (i) and (l), respectively}
 \label{fig:TT_Pplane}
\end{figure*}
\subsection{\textbf{Linear stability analysis}}\label{ss22}
The  equilibrium points of the system \eqref{eq:5}-\eqref{eq:6} are (i) $E_1= (0,M_1)$ and (ii) $E_2= (m_2,M_2)$, where 
$m_2={\frac {-M_{2}\,cef+M_{2}\,bd-cf}{c \left( M_{2}\,e+d+1 \right) }}$, and $M_2$ is a root of the equation
\begin{equation}
    Ax^2+Bx+C=0, \label{eq10}
\end{equation} 
with $A=bce\sigma+be\epsilon\,\sigma+bce+be\epsilon$, $B=acef+bcd\sigma+bd\epsilon\,\sigma-abd+bcd+bc\sigma+bd\epsilon+b
\epsilon\,\sigma+bc+b\epsilon$, $C=acf$. 

The Jacobian matrix of the system \eqref{eq:5}-\eqref{eq:6}
is,
\begin{eqnarray}
J = \left[ \begin {array}{cc} {\frac {adm \left( 2\,Mef+Mem+dm+2\,f+m
 \right) }{ \left( Mef+Mem+dm+f+m \right) ^{2} \left( 1+\sigma
 \right) }}-\epsilon-c&-{\frac {da{m}^{2}e \left( f+m \right) }{
 \left( 1+\sigma \right)  \left(  \left( Me+d+1 \right) m+f \left( Me+
1 \right)  \right) ^{2}}}\\ \noalign{\medskip}c-{\frac {bMdf \left( Me
+1 \right) }{ \left( e \left( f+m \right) M+f+ \left( d+1 \right) m
 \right) ^{2}}}&-{\frac {dmb \left( dm+f+m \right) }{ \left( Mef+Mem+d
m+f+m \right) ^{2}}}\end {array} \right], \label{10}
\end{eqnarray}
\begin{theorem}
The system \eqref{eq:5}-\eqref{eq:6} is unstable at $E_1$.
\end{theorem}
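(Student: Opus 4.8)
The plan is to read the claim straight off the Jacobian $J$ in \eqref{10} evaluated at $E_1=(0,M_1)$, using a structural feature of \eqref{eq:5}--\eqref{eq:6}: every term of \eqref{eq:6}, and the whole nonlinear part of \eqref{eq:5}, carries an overall factor of $m$ (in fact $m^2$ in \eqref{eq:5}). Consequently, at $m=0$ the entry $J_{11}$ keeps only $-\epsilon-c$, the entry $J_{12}$ vanishes (its numerator is $\propto m^2$), and the entry $J_{22}$ vanishes (its numerator is $\propto m$), so that
\begin{equation*}
J(E_1)=\begin{pmatrix} -(\epsilon+c) & 0\\ c-\dfrac{bdM_1}{f(eM_1+1)} & 0\end{pmatrix}.
\end{equation*}
This lower-triangular matrix has eigenvalues $\lambda_1=-(\epsilon+c)<0$ and $\lambda_2=0$ for \emph{every} $M_1$; note the off-diagonal entry and the precise value of $M_1$ (i.e.\ the root of \eqref{eq10}) never enter, so the conclusion will hold uniformly along the whole boundary branch.

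From here I would argue in the usual way: the characteristic polynomial is $\lambda^2+(\epsilon+c)\lambda=0$, so asymptotic stability -- which for a planar system requires $\mathrm{tr}\,J<0$ \emph{and} $\det J>0$ -- fails at $E_1$ because $\det J(E_1)=0$. Hence not all eigenvalues of $J(E_1)$ lie strictly in the left half-plane, $E_1$ is not an attractor of \eqref{eq:5}--\eqref{eq:6}, and in the terminology of the paper this is exactly the assertion that the system is unstable at $E_1$.

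The genuinely delicate step -- and the one I expect to be the main obstacle -- is the zero eigenvalue: linearization is inconclusive at a non-hyperbolic equilibrium, so one must rule out higher-order terms reinstating stability. Here that check is short rather than computational: substituting $m=0$ into \eqref{eq:5}--\eqref{eq:6} gives $\dot m=\dot M=0$, so the line $\{m=0\}$ consists entirely of equilibria and is itself an (analytic, global) centre manifold through $E_1$ on which the reduced flow is identically zero. Thus $E_1$ is a non-isolated equilibrium embedded in a one-parameter family, which by itself forbids asymptotic stability with no further normal-form work and settles the theorem; the only thing one must be careful about is recognising that the neutral direction is occupied by a continuum of fixed points, rather than setting out to build a Lyapunov or Chetaev function.
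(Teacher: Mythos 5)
Your computation coincides with the paper's: the authors also evaluate the Jacobian \eqref{10} at $E_1$, obtain the lower-triangular matrix with eigenvalues $-\epsilon-c$ and $0$, and stop there, so the core of your argument is the same. The difference is that you go on to confront the part the paper silently skips: with a zero eigenvalue the linearization is inconclusive, and the paper's proof as written does not actually justify the word ``unstable'' --- it merely lists the spectrum. Your observation that every term of \eqref{eq:5}--\eqref{eq:6} carries a factor of $m$, so that the whole line $\{m=0\}$ is a curve of equilibria forming the centre manifold through $E_1$, is exactly the right way to close that gap: a non-isolated equilibrium can never be asymptotically stable, and no normal-form computation is needed. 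One caveat you should make explicit rather than wave at: what this argument (and the paper's) establishes is failure of \emph{asymptotic} stability, not Lyapunov instability. Indeed, near $E_1$ one has $\dot m\approx-(\epsilon+c)m$ and $\dot M=O(m)$, so trajectories starting close to $E_1$ with small $m>0$ collapse onto a nearby point of the line $\{m=0\}$; $E_1$ is therefore neutrally stable in the Lyapunov sense (stable but not attracting), and the theorem's word ``unstable'' is only defensible if read as ``not asymptotically stable/degenerate,'' which is evidently how the authors intend it. Flagging that terminological point would make your version strictly stronger than the paper's proof rather than merely a completion of it.
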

\begin{proof}
The Jacobian matrix \eqref{10} at $E_1$ is,
\begin{eqnarray*}
J_1=\left[ \begin {array}{cc} -\epsilon-c&0\\ \noalign{\medskip}c-{\frac 
{bM_{1}\,df \left( M_{1}\,e+1 \right) }{ \left( M_{1}\,ef+f \right) ^{
2}}}&0\end {array} \right],
\end{eqnarray*}
The eigenvalues of $J_1$ matrix are $-\epsilon-c$ and $0$. So the eigenvalues, one is negative and the other one is zero. 
\end{proof}

\begin{theorem}
The system \eqref{eq:5}-\eqref{eq:6} is locally asymptotically stable at $E_2$ if 
\begin{enumerate}
     \item  ${\frac {dam_{2}\, \left( 2\,M_{2}\,ef+M_{2}
\,em_{2}+dm_{2}+2\,f+m_{2} \right) }{ \left( M_{2}\,ef+M_{2}\,em_{2}+d
m_{2}+f+m_{2} \right) ^{2} \left( 1+\sigma \right) }} < \epsilon + c $,\\
     \item $c > {\frac {M_{2}\,bdf
 \left( M_{2}\,e+1 \right) }{ \left( e \left( f+m_{2} \right) M_{2}+f+
 \left( d+1 \right) m_{2} \right) ^{2}}}.$
 \end{enumerate}
\end{theorem}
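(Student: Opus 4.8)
The plan is to invoke the Routh--Hurwitz criterion for a planar system: the equilibrium $E_2$ is locally asymptotically stable exactly when the Jacobian $J_2 := J|_{E_2}$ (the matrix \eqref{10} evaluated at $E_2=(m_2,M_2)$) satisfies $\operatorname{tr} J_2 < 0$ and $\det J_2 > 0$. The entire argument therefore reduces to reading off the signs of the four entries of $J_2$, and the two hypotheses in the statement are tailored precisely to make those signs work out.

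First I would record the sign of each entry of $J_2=\bigl(\begin{smallmatrix} J_{11} & J_{12}\\ J_{21} & J_{22}\end{smallmatrix}\bigr)$, using that all parameters are positive and $m_2>0$ at the interior equilibrium (I would note explicitly that $m_2>0$ is part of the standing assumption, since it is what guarantees strict negativity below, and that the denominators in \eqref{10} do not vanish because $M_2e+1>0$). The off-diagonal entry $J_{12}=-\dfrac{da\,m_2^{2}e(f+m_2)}{(1+\sigma)\bigl((M_2e+d+1)m_2+f(M_2e+1)\bigr)^{2}}$ is strictly negative with no extra assumption, and likewise $J_{22}=-\dfrac{d m_2 b(dm_2+f+m_2)}{(M_2ef+M_2em_2+dm_2+f+m_2)^{2}}$ is strictly negative unconditionally. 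Hypothesis (1) is exactly the inequality $J_{11}<0$, and hypothesis (2) is exactly the inequality $J_{21}>0$.

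Next I would assemble the two Routh--Hurwitz quantities. Since $J_{22}<0$ always and $J_{11}<0$ by (1), we get $\operatorname{tr} J_2 = J_{11}+J_{22}<0$. For the determinant, $J_{11}J_{22}>0$ because both diagonal entries are negative, while $J_{12}J_{21}<0$ because $J_{12}<0$ unconditionally and $J_{21}>0$ by (2); hence $\det J_2 = J_{11}J_{22}-J_{12}J_{21}>0$. With both criteria satisfied, every eigenvalue of $J_2$ has negative real part, so $E_2$ is locally asymptotically stable, which is the claim.

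The main obstacle here is bookkeeping rather than analysis: one has to be careful to get strict (not merely weak) negativity of $J_{12}$ and $J_{22}$, which is why positivity of $m_2$ matters, and to check that the denominators in \eqref{10} are nonzero at $E_2$. There is no cancellation, no delicate estimate, and no need to touch the quadratic \eqref{eq10} for $M_2$ or the explicit formula for $m_2$; the entire content is the identification of conditions (1) and (2) with the sign conditions $J_{11}<0$ and $J_{21}>0$, after which the $2\times2$ Routh--Hurwitz test closes the proof immediately.
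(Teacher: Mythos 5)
Your proposal is correct and follows essentially the same route as the paper: both evaluate the Jacobian at $E_2$, observe that the off-diagonal entry $\Psi_{12}$ and the diagonal entry $\Psi_{22}$ are negative unconditionally, identify hypotheses (1) and (2) with $\Psi_{11}<0$ and $\Psi_{21}>0$, and close with the two-dimensional Routh--Hurwitz (trace negative, determinant positive) criterion. Your explicit remarks on $m_2>0$ and nonvanishing denominators are a minor, sensible addition but do not change the argument.
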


\begin{proof}
The Jacobian matrix \eqref{10} at $E_2$ is:
\begin{eqnarray*}
    J_2 &=& \left[ \begin {array}{cc} {\frac {dam_{2}\, \left( 2\,M_{2}\,ef+M_{2}
\,em_{2}+dm_{2}+2\,f+m_{2} \right) }{ \left( M_{2}\,ef+M_{2}\,em_{2}+d
m_{2}+f+m_{2} \right) ^{2} \left( 1+\sigma \right) }}-\epsilon-c&-{
\frac {da{m_{2}}^{2}e \left( f+m_{2} \right) }{ \left( 1+\sigma
 \right)  \left(  \left( M_{2}\,e+d+1 \right) m_{2}+f \left( M_{2}\,e+
1 \right)  \right) ^{2}}}\\ \noalign{\medskip}c-{\frac {M_{2}\,bdf
 \left( M_{2}\,e+1 \right) }{ \left( e \left( f+m_{2} \right) M_{2}+f+
 \left( d+1 \right) m_{2} \right) ^{2}}}&-{\frac {dm_{2}\,b \left( dm_
{2}+f+m_{2} \right) }{ \left( M_{2}\,ef+M_{2}\,em_{2}+dm_{2}+f+m_{2}
 \right) ^{2}}}\end {array} \right],\\
 &=& \left[\begin {array}{cc}
 \Psi_{11} & \Psi_{12}\\
 \Psi_{21} & \Psi_{22}\end{array}\right].
\end{eqnarray*}
The characteristic equation of this matrix $J_2$ is,
\begin{equation*}
    z^2+A_1z+A_2=0,
\end{equation*}
where, $A_1= -(\Psi_{11}+\Psi_{22})$ and $A_2= (\Psi_{11}\Psi_{22}-\Psi_{12}\Psi_{21})$. According to Routh Hurwitz criterion, the system \eqref{eq:5}-\eqref{eq:6} is locally asymptotically stable at $E_2$ if and only if $A_1 >0$ and $A_2 > 0$. Clearly, $\Psi_{22} < 0$. So, if $\Psi_{11} < 0$, i.e if ${\frac {dam_{2}\, \left( 2\,M_{2}\,ef+M_{2}
\,em_{2}+dm_{2}+2\,f+m_{2} \right) }{ \left( M_{2}\,ef+M_{2}\,em_{2}+d
m_{2}+f+m_{2} \right) ^{2} \left( 1+\sigma \right) }} < \epsilon + c$, then $A_1 > 0$. Now, $\Psi_{22}$ and $\Psi_{12}$ are negative always. So whenever $\Psi_{11} < 0$, if $\Psi_{21} >0$ then we can obtain $A_2 >0$. Now when $c > {\frac {M_{2}\,bdf
 \left( M_{2}\,e+1 \right) }{ \left( e \left( f+m_{2} \right) M_{2}+f+
 \left( d+1 \right) m_{2} \right) ^{2}}}$ holds then we have $\Psi_{21} >0$. Thus the conditions for local asymptotic stability at the nonzero equilibrium point $E_2$ for the sytem \eqref{eq:5}-\eqref{eq:6} are 
 \begin{enumerate}
     \item  ${\frac {dam_{2}\, \left( 2\,M_{2}\,ef+M_{2}
\,em_{2}+dm_{2}+2\,f+m_{2} \right) }{ \left( M_{2}\,ef+M_{2}\,em_{2}+d
m_{2}+f+m_{2} \right) ^{2} \left( 1+\sigma \right) }} < \epsilon + c $,\\
     \item $c > {\frac {M_{2}\,bdf
 \left( M_{2}\,e+1 \right) }{ \left( e \left( f+m_{2} \right) M_{2}+f+
 \left( d+1 \right) m_{2} \right) ^{2}}}.$
 \end{enumerate}
\end{proof}
Stability diagrams for different values of parameter $b$ while keeping all other parameter values same as in table \ref{table:t1} is provided Fig. \ref{fig:TT_Pplane}.

Clearly, equation \eqref{eq10} has two roots lets say $M_{p}$ and $M_{n}$. Then let us consider the two fixed points as $(m_{p},M_{p})$ and $(m_{n},M_{n})$ where,
\begin{align}
m_{p} &= \frac{-M_{p}cef + M_{p}bd -cf}{c_{2}(M_{p}e + d + 1)},\\
\label{eq:posE2M}
M_{p} &= \frac{-B + \sqrt{B^2-4AC}}{2A},\\
m_{n} &= \frac{-M_{n}cef + M_{n}bd -cf}{c_{2}(M_{n}e + d + 1)},\\
M_{n} &= \frac{-B - \sqrt{B^2-4AC}}{2A},
\label{eq:negE2}
\end{align}
\textcolor{black}{Below we have provided a two parameter $b-d$ stability diagram. In panel (a) of Fig. \ref{fig:MSNd3}, we observe that larger value of $b$ makes the equilibrium point $(m_{p},M_{p})$ to be stable (marked in blue), while the lower values of $b$ makes the equilibrium point unstable (marked in red) and also its nonexistence (marked in grey) in an alternative manner. In panel (b), we see that  the equilibrium point $(m_{n},M_{n})$ exhibits saddle behavior (marked in yellow) for larger values of $b,d$. Lower values of the parameters makes the equilibrium point $(m_{p},M_{p}), (m_{n},M_{n})$ non-existent.}

\begin{figure}[!htbp]
\begin{center}
\includegraphics[width=1\textwidth]{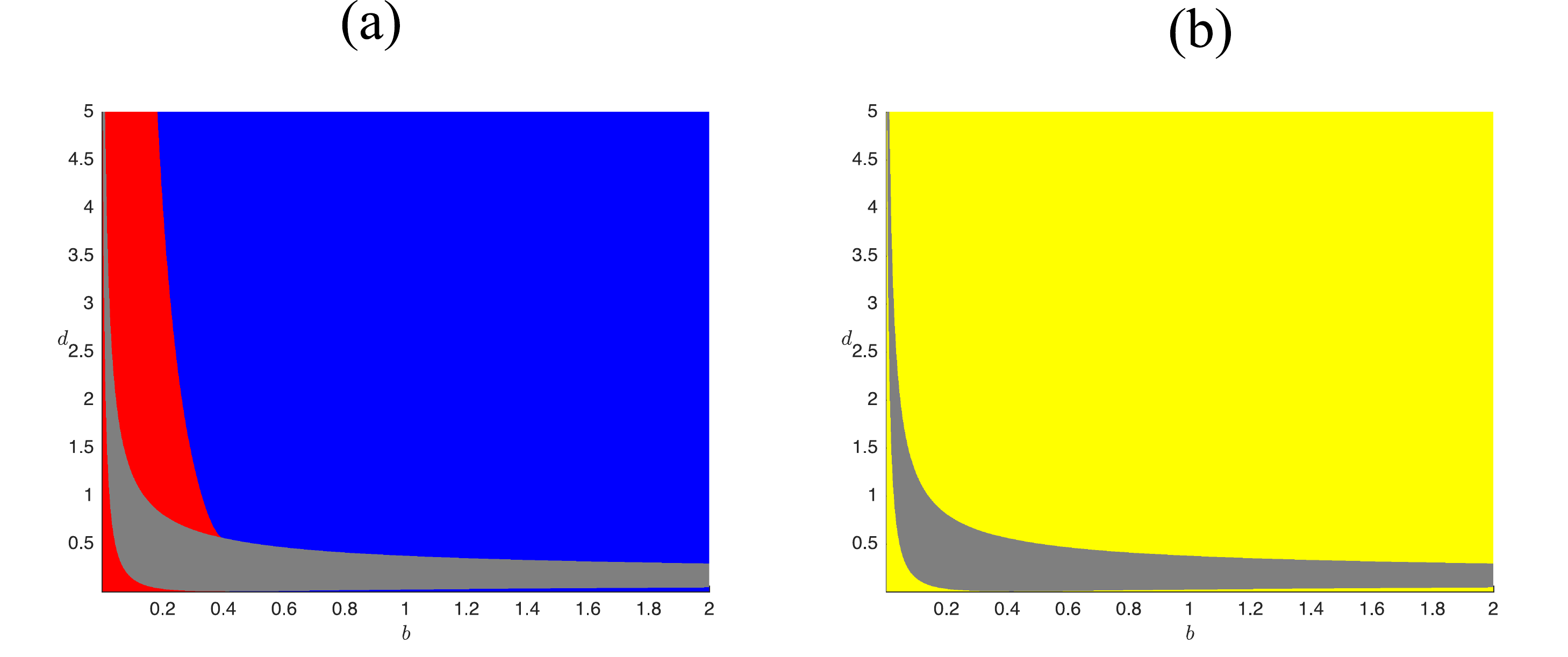}
\end{center}
\caption{(a) Two parameter $b-d$ stability plane. Blue colour denotes the region where the fixed point $(m_{p},M_{p})$ is stable. Red colour denotes the region where the fixed point $(m_{p},M_{p})$ is unstable. The grey region denotes the non-existence of fixed point $E_{2}$. The remaining parameters fixed as $a=1,d=0.9,\sigma=1,\epsilon=0.01,c=0.05,f=1,e=1$. (b) Two parameter $b-d$ stability plane.  The grey region denotes the non-existence of fixed point $E_{2}$. The yellow colour denotes the saddle nature of fixed point $(m_{n},M_{n})$. The remaining parameters fixed as $a=1,d=0.9,\sigma=1,\epsilon=0.01,c=0.05,f=1,e=1$} 
   \label{fig:MSNd3}
\end{figure}

To showcase the Hopf bifurcation, say we fix $d=3$ and continuate the eigenvalues of the two fixed points obtained from \eqref{eq:posE2M}-\eqref{eq:negE2} with parameter $b$. Here $\lambda_{p},\lambda_{n}$ denote the eigenvalues of the the fixed points $(m_{p},M_{p})$ and $(m_{n},M_{n})$ respectively. As we observe in Fig. \ref{fig:Hopfd3}, the real part of $\lambda_{p}$ crosses $0$ at $b = 0.2308$ where we detect the Hopf bifurcation. We also detect saddle-node bifurcation as we vary parameter $b$. In Fig. \ref{fig:SNbifurcat} (a) and (b), we continuate the first ($m$) and second component ($M$) of the fixed point with parameter $b$. The red and blue colour denote fixed points $(m_{p},M_{p})$ and $(m_{n},M_{n})$ respectively. As parameter $b$ is decreased, $m_{p}, m_{n}$ come closer and collide at $b = 0.03013$ and then we observe only a single component emerging out for $b < 0.03013$, illustrating  a saddle-node bifurcation.

 \begin{figure}[htbp]
   \centering
    \includegraphics[width=0.5\textwidth]{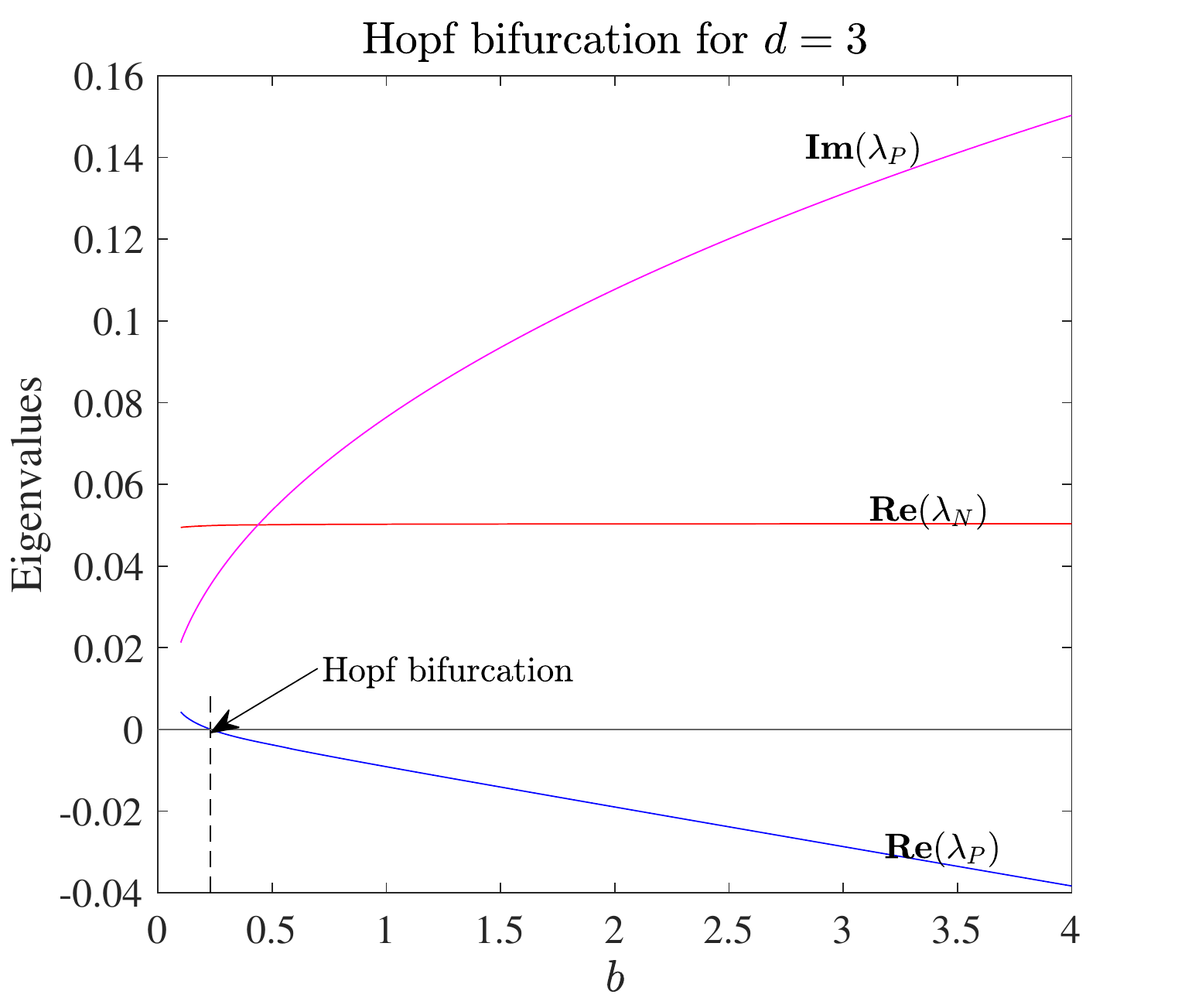}
    \caption{Hopf bifurcation detection for $d=3$ as parameter $b$ varies. The red curve denotes the variation of the eigenvalues of the Jacobian evaluated at the equilibrium point $(m_{n},M_{n})$. The magenta curve shows the variation of the imaginary part of the eigenvalue of the Jacobian at $(m_{p},M_{p})$. The blue curve denotes the eigenvalue of the Jacobian at $(m_{p},M_{p})$. The real part of the eigenvalue crosses zero denoting a Hopf bifurcation at $b=0.2308$}
    \label{fig:Hopfd3}
  \end{figure}

  \begin{figure}[!htbp]
\begin{center}
\includegraphics[width=1\textwidth]{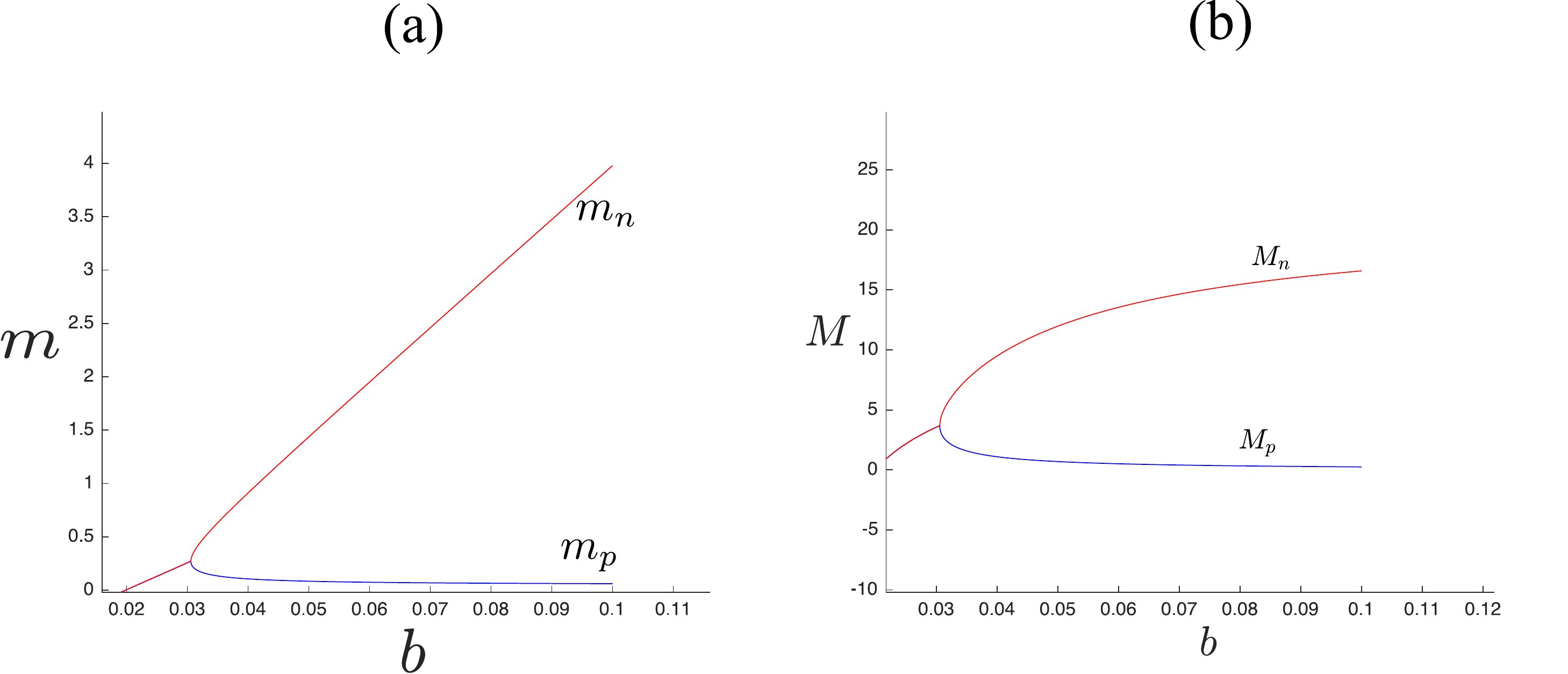}
\end{center}
\caption{(a) Saddle-node bifurcation for $d=3$ as parameter $b$ varies. The red curve denotes the first component of the fixed point $(m_{n},M_{n})$ of type $E_{2}$ and the blue curve denotes the first component of the fixed point $(m_{p},M_{p})$. (b) Saddle-node bifurcation for $d=3$ as parameter $b$ varies. The red curve denotes the second component of the fixed point $(m_{n},M_{n})$ of type $E_{2}$ and the blue curve denotes the second component of the fixed point $(m_{p},M_{p})$} 
   \label{fig:SNbifurcat}
\end{figure}

\subsection{\textbf{Bifurcation analysis of the reduced model}}\label{sec:bifurcation}
In this section, we present in detail a numerical bifurcation of the reduced model \eqref{eq:5}--\eqref{eq:6}. We discuss the bifurcation scenarios as the parameters are varied from their values in Table \ref{table:t1}.Here, the ingestion rate of oxidized LDL by macrophages $b$ and the intake rate of oxidized LDL concentration $d$ are considered as bifurcation parameters. The parameter $b$ represents how fast oxidized LDL particles can be removed from the affected region inside the intima and this is the prime reason to choose $b$ as one of the bifurcating parameter. As in atherosclerotic plaque formation, the rate at which LDL particle enters into the intima play a major role hence the parameter $d$ is considered as another bifurcating parameters \citep{frisdal2011interleukin}. The bifurcation diagrams were produced via numerical continuation software MATCONT \citep{Matcont2003}. The bifurcation diagram for the concentration of monocyte $m$ as a function of the conversion rate of macrophages $b$ is shown in Fig.~\ref{fig:1pard=3}. It is divided into three regions. In region I, $0.03059<b<0.2314$, there exist two equilibria: a stable (upper branch) and an unstable (lower branch) solution. The equilibria collide and annihilate in a saddle-node bifurcation, denoted ${\rm LP}$, at $b=0.03059$. Increasing $b$, the system undergoes a Hopf bifurcation, denoted ${\rm HB}$, along with the upper branch equilibrium at $b=0.2314$. The first Lyapunov coefficient $\alpha=3.7483\times10^{-6}$, since $\alpha$ is positive, then the HB is subcritical type, and the periodic oscillation that appears at the ${\rm HB}$ point is unstable. The unstable periodic oscillation terminates at homoclinic bifurcation with period=176.327, see Fig.~\ref{fig:limitcycle}. To the right of the HB, the unstable equilibrium (upper branch) gains stability, so in region II there are two stable equilibria. Thus in this region exists bistable phenomenon. Further increasing $b$, there exists only one stable equilibrium. A neutral saddle, denoted ${\rm NS}$, appears at $b=0.43656$. At the NS point, the sum of the eigenvalues is zero, meaning it is a hyperbolic saddle and is not a bifurcation point.
\begin{figure}[htbp]
\centering
  \begin{subfigure}[b]{.6\linewidth}
    \centering
   \caption{}
   \includegraphics[width=\textwidth]{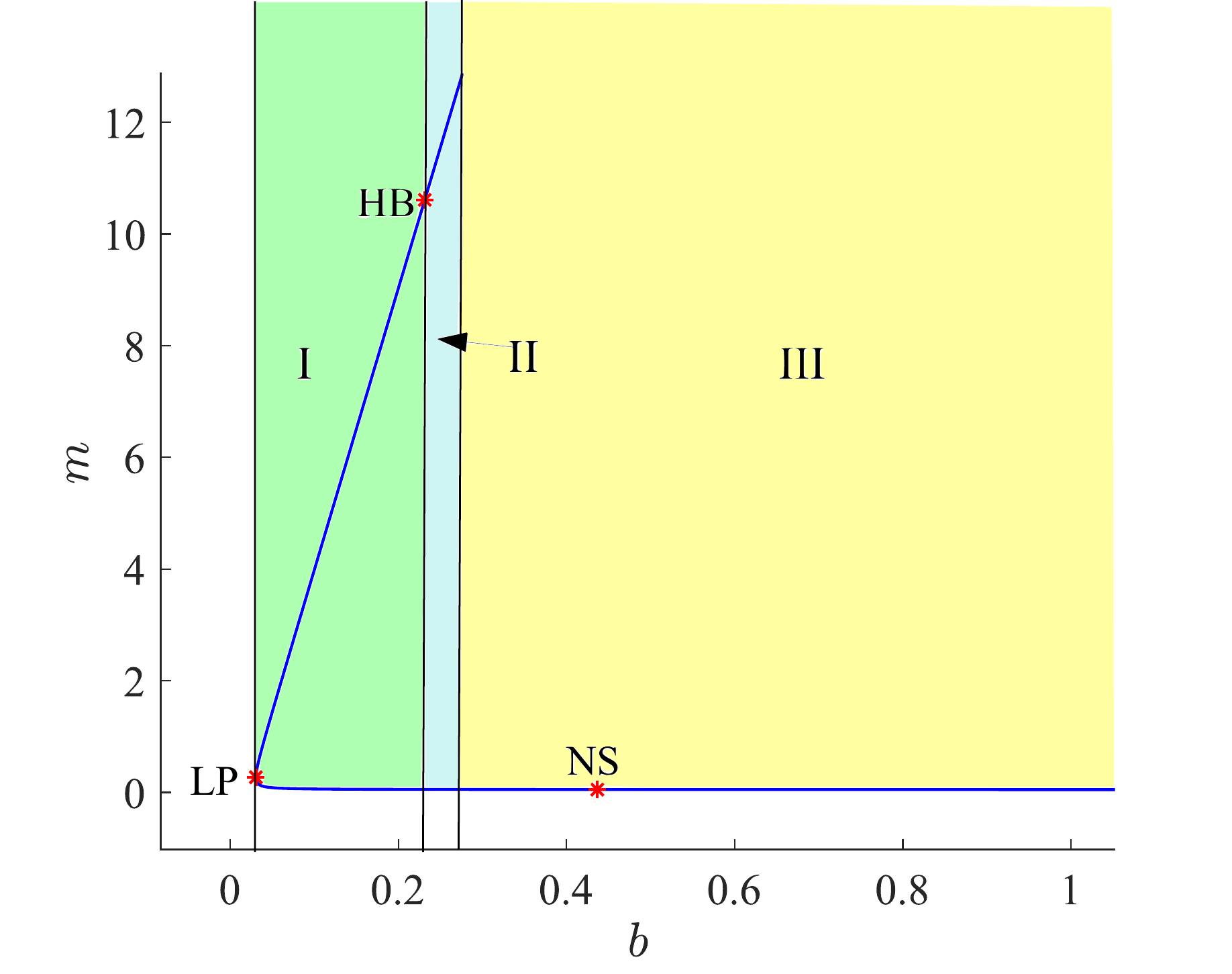}
   \label{fig:1pard=3}
  \end{subfigure}%
   \begin{subfigure}[b]{.7\linewidth}
    \centering
   \caption{}
   \includegraphics[width=\textwidth]{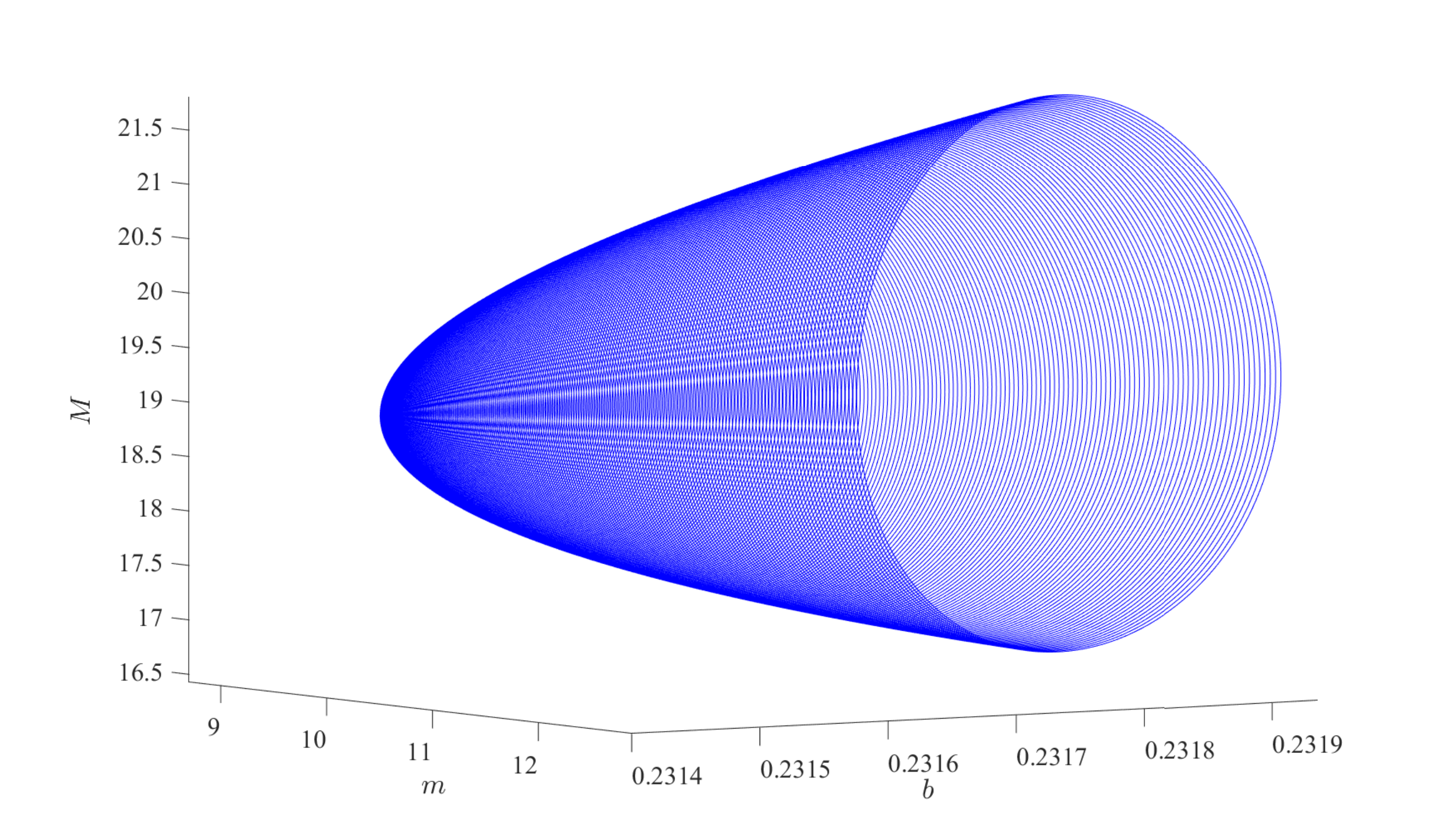}
   \label{fig:limitcycle}
  \end{subfigure}%
    \caption{(a) One-parameter bifurcation diagram of \eqref{eq:5}--\eqref{eq:6} varying $b$. other parameter values as in Table \ref{table:t1}. The bifurcations are labelled as follows: SN: saddle-node bifurcation and HB: Hopf bifurcation}
    \label{fig:One-parameter bifurcation}
  \end{figure}
  
 Next, we compute two-parameter bifurcation analysis by varying parameters $b$ and $d$. Here we consider two bifurcation set: $e=1$ and $e=5$, other system parameters are fixed as show in Table~\ref{table:t1}. The codimension-2 bifurcation diagram in $(b,d)$-plane when $e=1$ is shown in Fig.~\ref{fig:2par1}. The black, magenta, and red curves correspond to the loci of the saddle-node (LP), Hopf (HB), and neutral-saddle (NS) bifurcations, respectively. For extremely low values of $d$, there exists only the saddle-node curve meaning the system has only one saddle bifurcation (see Fig.~\ref{fig:2parzoom}). Increasing $d$, appears a Bogdanov-Takens (BT) bifurcation at $(b,d)=(0.42,0.543)$, this is a codimension-2 bifurcation. The loci of the Hopf and neutral-saddle bifurcations emanate from the BT point. At the BT point, the curve of the saddle-node bifurcation coincide tangentially with the curves of the Hopf and neutral-saddle bifurcations. Thus, besides the saddle-node bifurcation that already exist appears a Hopf bifurcation and neutral saddle bifurcation for values of $d$ above the BT bifurcation. Fig.~\ref{fig:2par1_1} is divided into three regions based on different types of dynamical behaviors. Each region is assigned number I, II, and III. See Table~\ref{table:codim2sym}.
  \begin{figure}[htbp]
      \centering
       \begin{subfigure}[b]{.6\linewidth}
    \centering
    \caption{}
   \includegraphics[width=\textwidth]{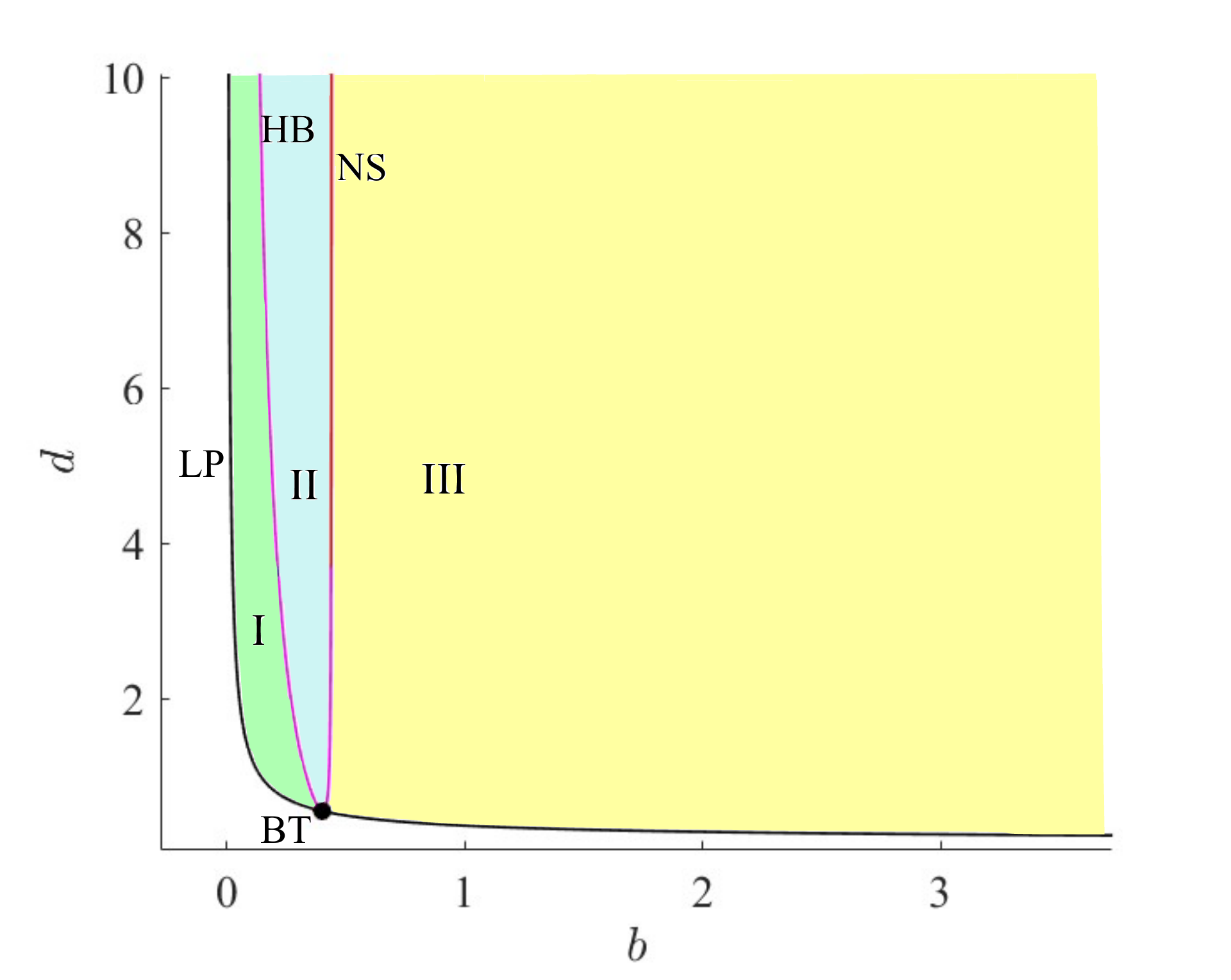}
    \label{fig:2par1_1}
  \end{subfigure}%
  \begin{subfigure}[b]{.6\linewidth}
    \centering
    \caption{}
   \includegraphics[width=\textwidth]{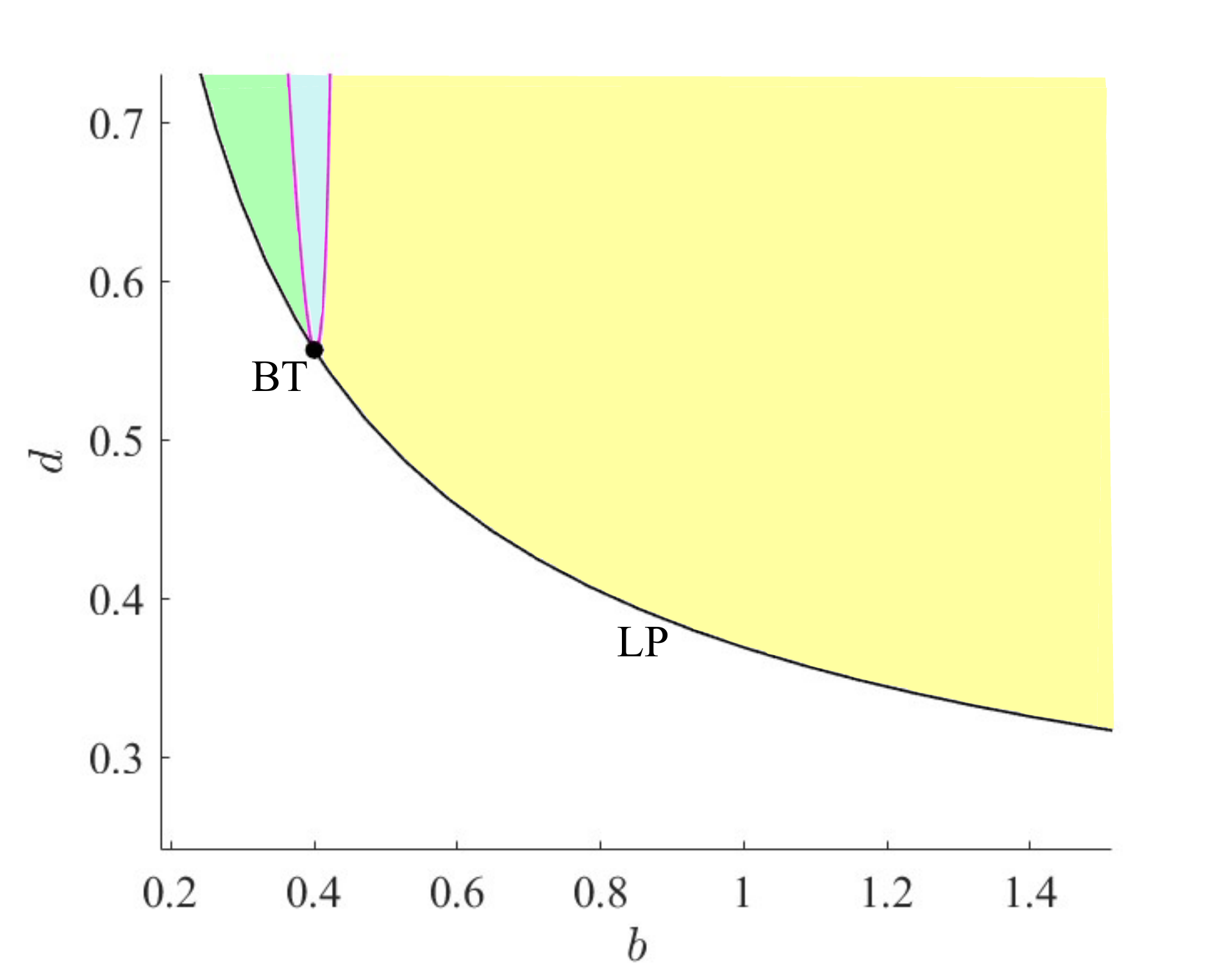}
    \label{fig:2parzoom}
  \end{subfigure}%
  \caption{Two parameter bifurcation diagram of \eqref{eq:5}--\eqref{eq:6} in $(b,d)$-plane with (a) $e=1$ and (b) $e=5$. Other parameter values as in Table \ref{table:t1}. The black and magenta curves are the loci of codimension-one bifurcations labelled as follows: SN: saddle-node bifurcation and HB: Hopf bifurcation}
  \label{fig:2par1}
  \end{figure}


 \begin{table}
\centering
\caption{Summary of the dynamics in the three regions in Fig.~\ref{fig:2par1_1}}
\label{table:codim2sym}
\begin{tabular}{|c|c|}
\hline
Region & Dynamical behaviour \\
\hline
I & One stable equilibrium, one unstable equilibrium, one unstable limit cycle.\\
 \hline
 II&  Two stable equilibria, one unstable limit cycle\\
 \hline
 III & One stable equilibrium, no limit cycle \\
 \hline
\end{tabular}
\label{table:t2}
\end{table}

Lastly, we set $e=5$ with other parameters fixed as in Table~\ref{table:t1}. Fig.~\ref{fig:2par2} shows a bifurcation diagram in $(b,d)$-plane. The bifurcation structure is similar to when $e=1$ except for an additional co-dimension one bifurcation point detected. For sufficiently low values of $d$, there is only a saddle-node LP curve. As we increase the value of $d$ a BT bifurcation is detected along the LP curve at $(b,d)=(0.6489,1.063)$. From the theory of Bogdanov-Takens bifurcation \citep{KuznetsovY.A.1995ElementsTheory}, the locus of the Hopf bifurcation HB emanates from the BT point and is tangent to the LP curve at this point. As the value of $d$ increases further, a generalized Hopf GH bifurcation is observed at $(d,b)=(0.5004,3.146)$, this is a codimension-two bifurcation where the HB changes from subcritical to supercritical. Further increasing the value of $d$ above the GH bifurcation, there exist only the loci of the saddle-node and HB bifurcations.
\begin{figure}[htbp]
  \centering
   \begin{subfigure}[b]{.6\linewidth}
    \centering
   \includegraphics[width=\textwidth]{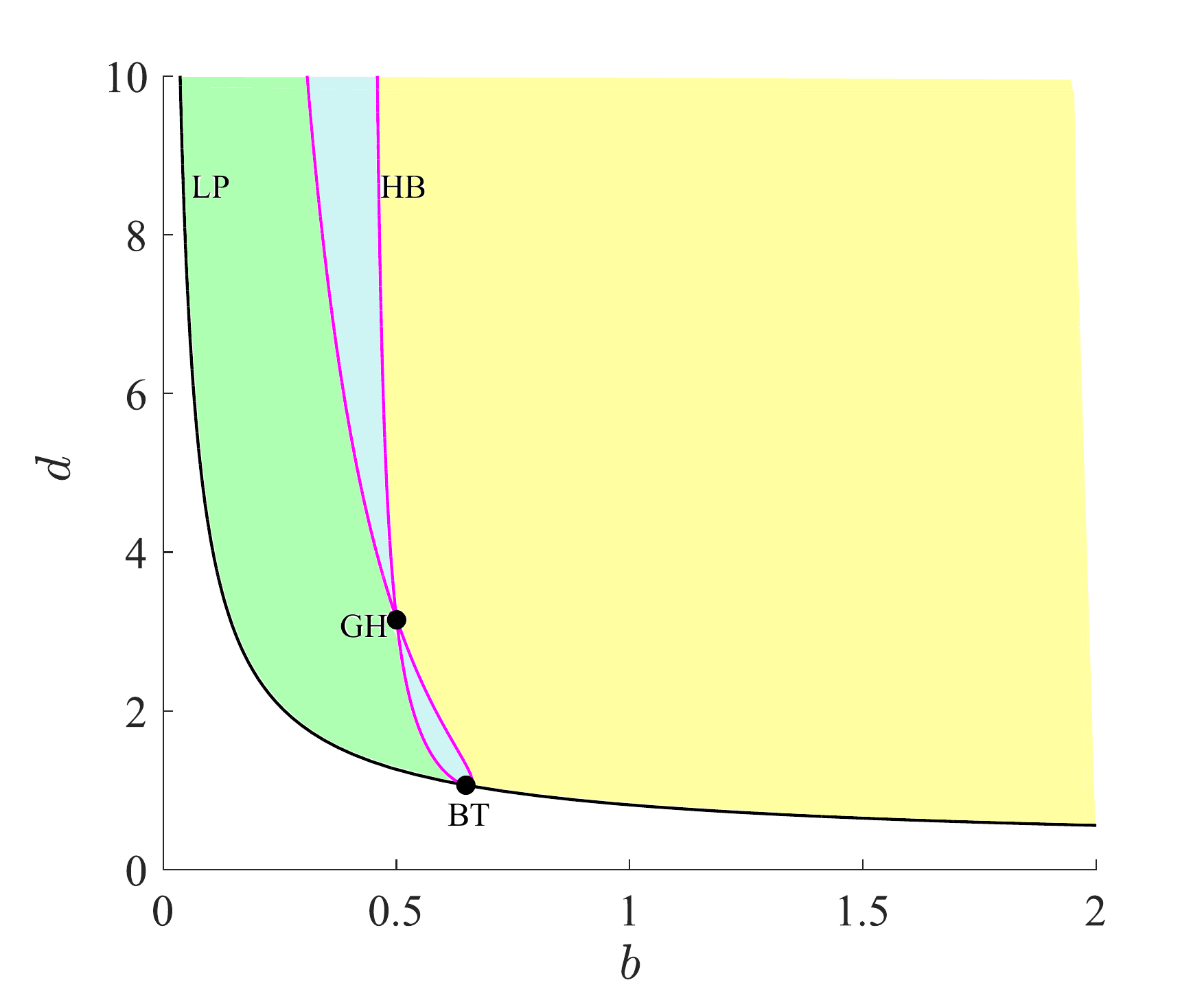}
  \end{subfigure}%
   \caption{Two parameter bifurcation diagram of \eqref{eq:5}--\eqref{eq:6} in the $(b,d)$-plane. other parameter values as in Table~\ref{table:t1}. The black and magenta curves are the loci of codimension-one bifurcations labelled as follows: SN: saddle-node bifurcation and HB: Hopf bifurcation}
       \label{fig:2par2}
\end{figure}

\section{Spatiotemporal dynamics} \label{s3}
In this section we explore the spatiotemporal patterns of \eqref{eq:m}--\eqref{eq:M} over one spatial domain. The system \eqref{eq:m}--\eqref{eq:M} is solved numerically using the method of lines technique. We use the second-order central finite difference scheme for the spatial derivative and a standard numerical scheme for the time derivative. In all simulations, we use zero-flux boundary conditions for $x\in[-L,L]$ and initial conditions
\begin{equation}
\label{eq:initial}
    m(x,0)=m^*+F(x) \hspace{1em} \text{and}\hspace{1em} M(x,0)=M^*+F(x),
\end{equation}
where $(m^*,M^*)$ is a homogeneous steady states of \eqref{eq:m}--\eqref{eq:M}. For all the initial conditions, the function $F(x)$ is a heterogenous perturbation from a homogeneous steady state. The perturbation is placed at the center of the domain. Unless otherwise stated, the diffusion coefficients $d_{1}$ and $d_{2}$ are given as $0.0001$ and $0.00001$, respectively.

\subsection{\textbf{Effects of varying $b$}}
 Now we consider the behaviour  of  model \eqref{eq:m}--\eqref{eq:M} varying $b$, other parameters are fixed as in Table \ref{table:t1}. In the initial condition we use
\begin{equation}
\label{eq:gauss}
    F(x)=\psi\,{\rm exp}\left( \frac{-x^2}{2\sigma^2} \right),
\end{equation}
with $\psi=0.3$ and $\sigma=0.1$.
 
 Figure~\ref{fig:pde_plot1} shows the spatiotemporal dynamics of $m$ for some values of $b$. For sufficiently low values of $b$, the perturbation initiates two counter-propagating pulses at the center of the domain. After a short time, the pulses transition to time-periodic oscillations as they move towards the boundary of the domain. For example in Fig.~\ref{fig:b=0.25-gauss} we have used $b=0.25$. As time progresses, the system goes back to the homogeneous steady state. With $b=0.5$ additional counter-propagating pulses are observed with complex spatiotemporal patterns at the back of the secondary pulses as they move across the domain. However, by increasing the value of $b$ further the pulses created at the centre by the initial perturbation transition to time-periodic oscillations which eventually spread to the entire domain. This results in synchronised oscillations of cells close to the boundaries and inhomogeneous patterns at the center, a typical example is shown in Fig,~\ref{fig:b=0.51-gauss}. Similar behaviour is observed in Figs.~\ref{fig:b=0.55-gauss} and \ref{fig:b=0.8-gauss} except for a  decrease in the amplitude oscillations as time progresses. For large values of $b$ the system returns quickly to a steady-state after a time-period oscillations for a short period. The results of numerical simulation for $b=2$ is shown in Fig.~\ref{fig:b=2.0-gauss}.
\begin{figure*}[htbp!]
\centering
\begin{subfigure}{.35\textwidth}
  \centering
  \caption{}
  \includegraphics[width = \textwidth]{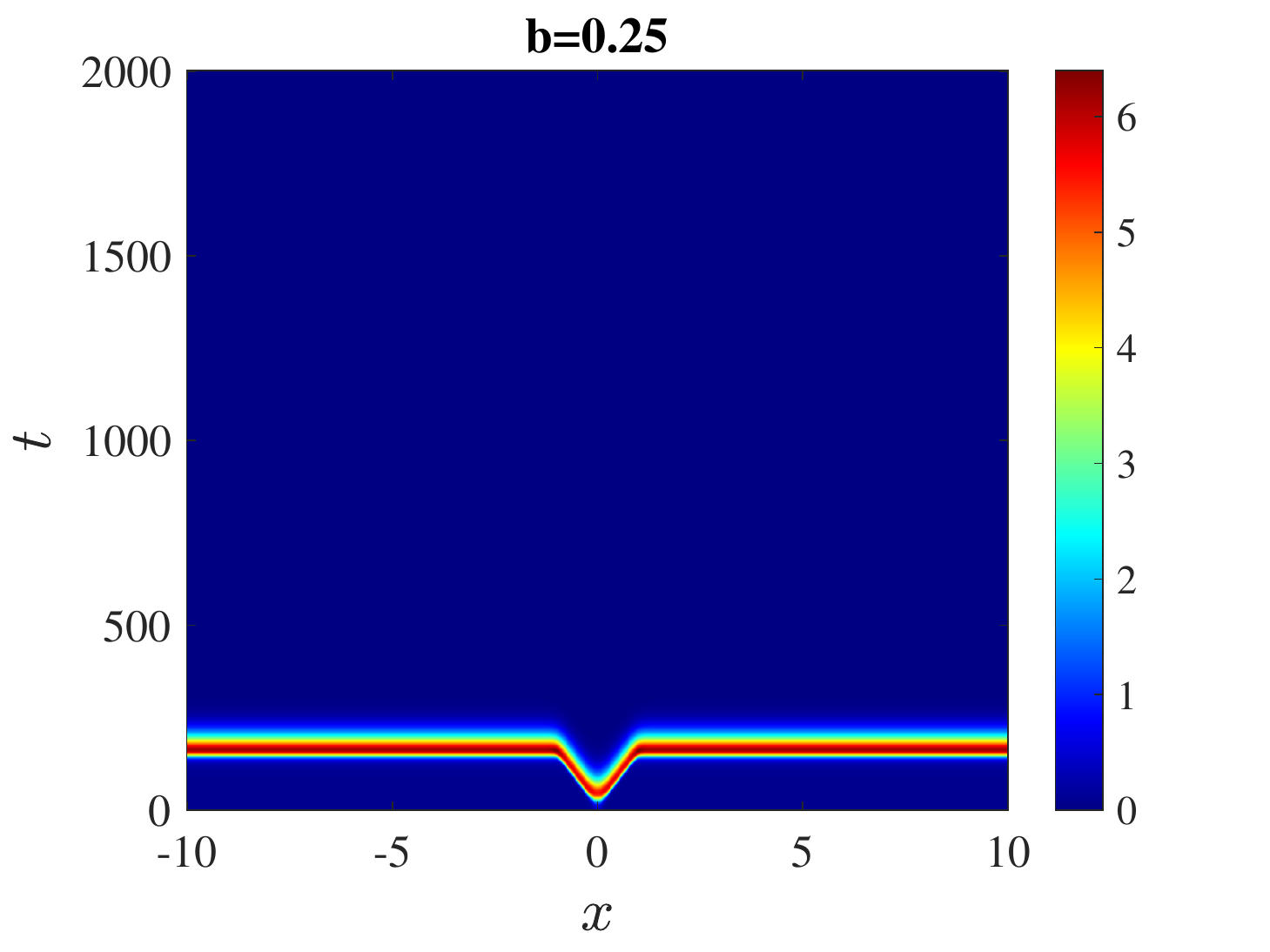}
  \label{fig:b=0.25-gauss}
\end{subfigure}%
\begin{subfigure}{.35\textwidth}
  \centering
  \caption{}
  \includegraphics[width = \textwidth]{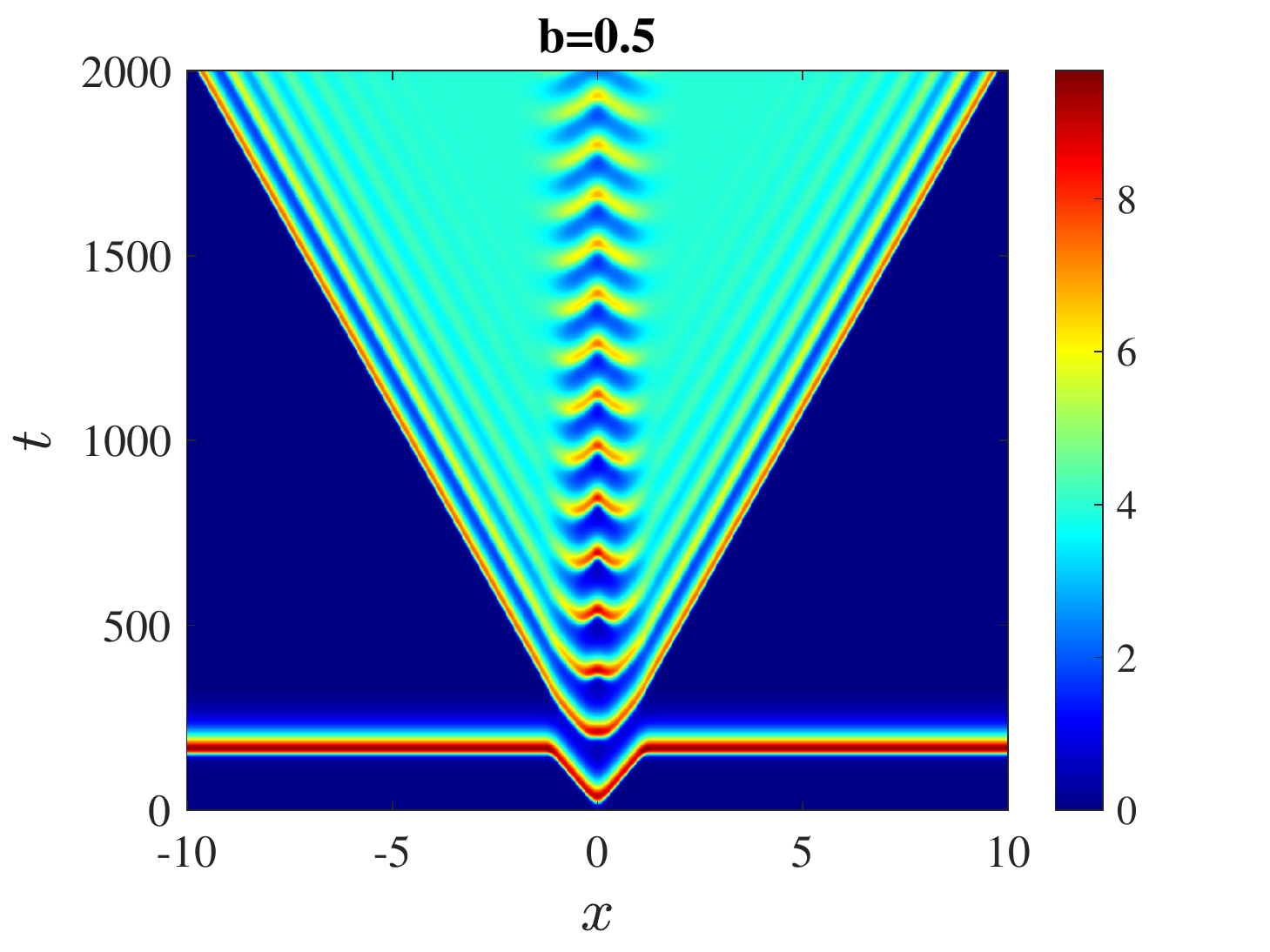}
  \label{fig:b=0.5-gauss}
\end{subfigure}%
\begin{subfigure}{.35\textwidth}
  \centering
  \caption{}
  \includegraphics[width = \textwidth]{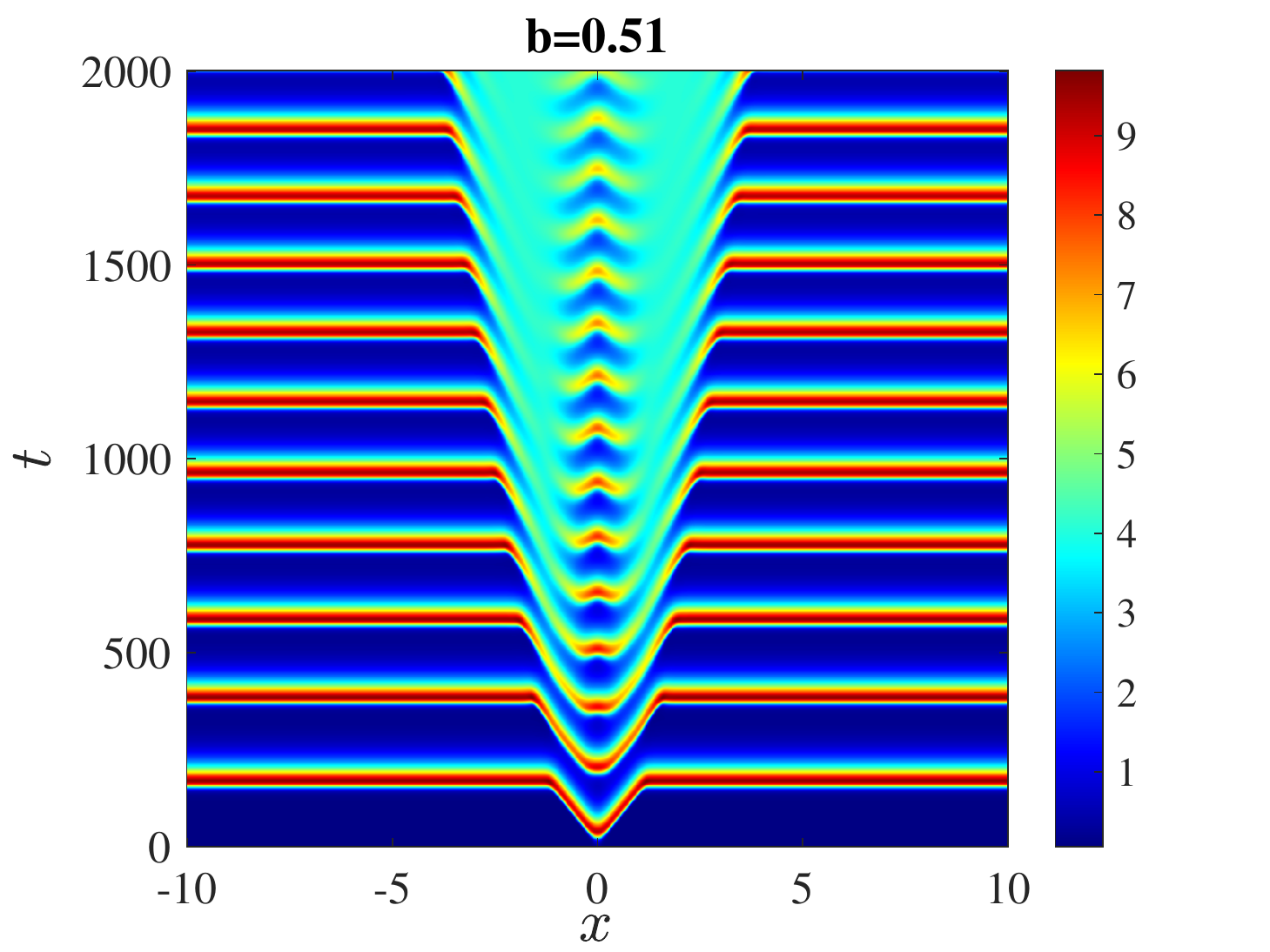}
   \label{fig:b=0.51-gauss}
\end{subfigure}\\%
\begin{subfigure}{.35\textwidth}
  \centering
  \caption{}
  \includegraphics[width = \textwidth]{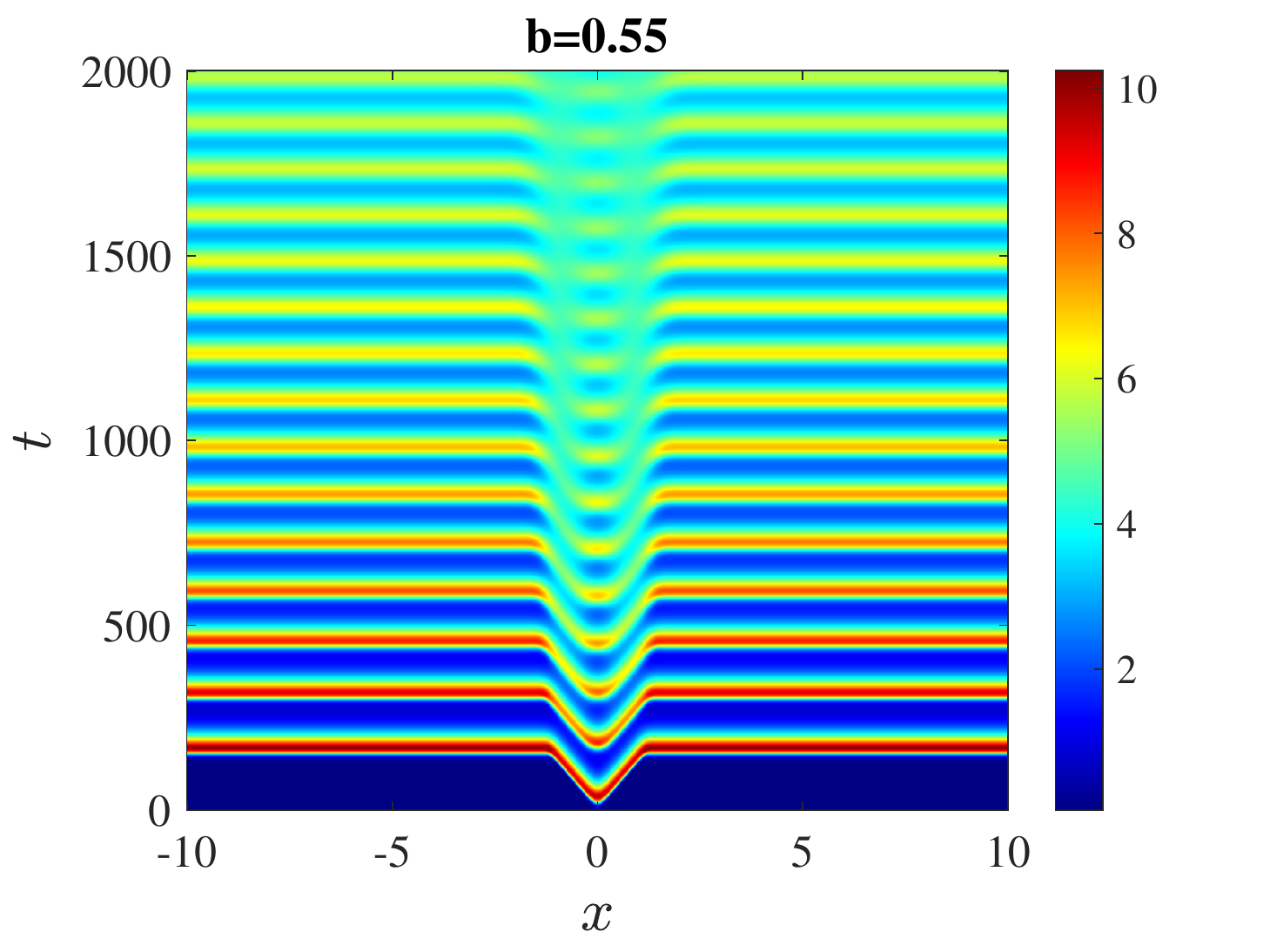}
   \label{fig:b=0.55-gauss}
\end{subfigure}%
\begin{subfigure}{.35\textwidth}
  \centering
  \caption{}
  \includegraphics[width = \textwidth]{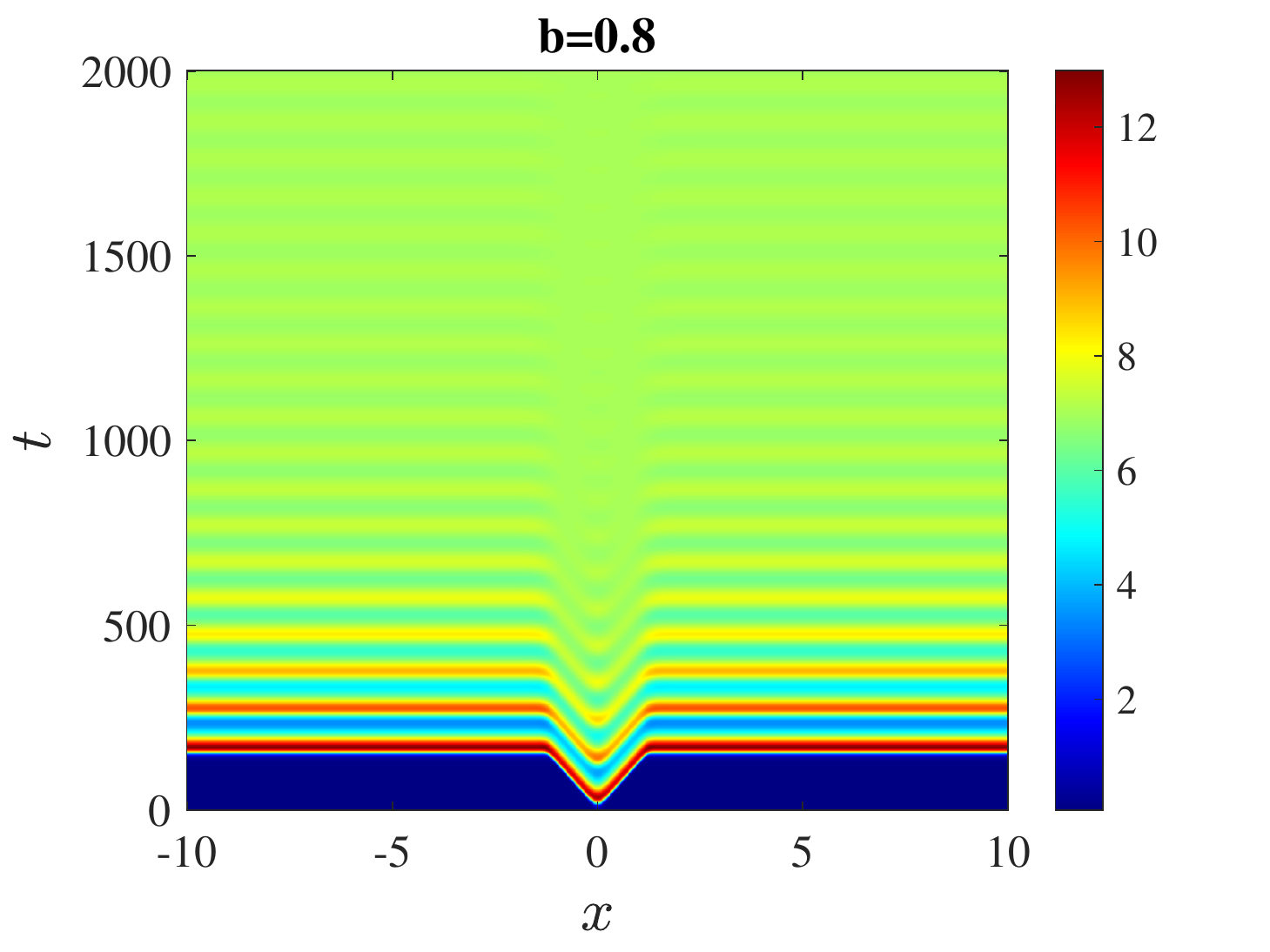}
   \label{fig:b=0.8-gauss}
\end{subfigure}%
\begin{subfigure}{.35\textwidth}
  \centering
  \caption{}
  \includegraphics[width = \textwidth]{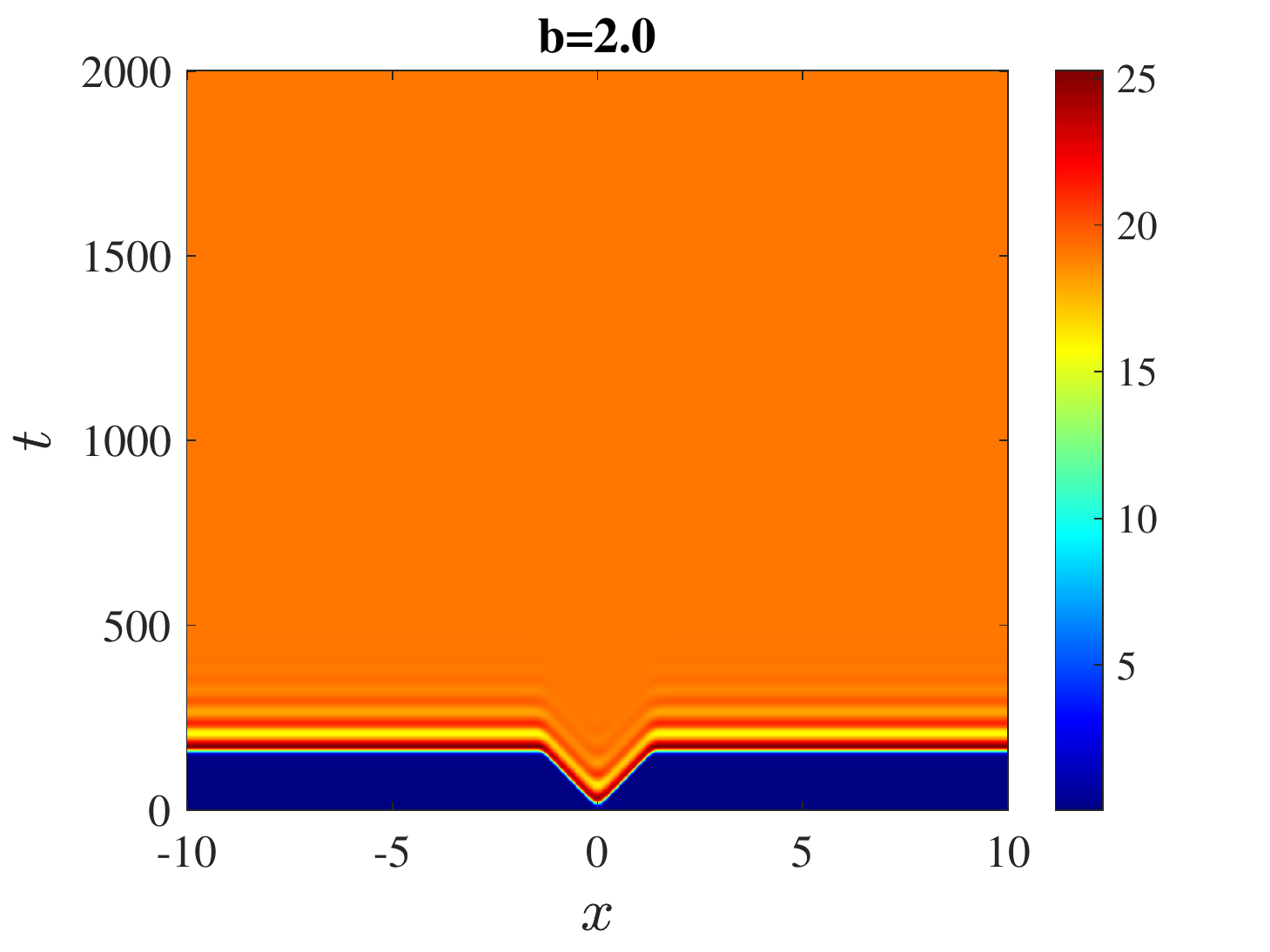}
   \label{fig:b=2.0-gauss}
\end{subfigure}%
\caption{Space-time plots of the $m$ for values of $b$. Specifically (a) $0.25$; (b) $0.5$; (c) $0.51$; (d) $0.55$; (e) $0.8$; and (f) $2.0$. The initial condition is $\eqref{eq:initial}$ with perturbation function $F(x)$ in $\eqref{eq:gauss}$,  $d_{1}=0.0001$, $d_{2}=0.00001$. Other parameters are fixed as in Table \ref{table:t1}}
\label{fig:pde_plot1}
\end{figure*}
\subsection{\textbf{Effect of changing diffusion coefficients $d_{1}$ and $d_{2}$}}
We now explore how selection of diffusion coefficients $d_{1}$ and $d_{2}$ influences the spatiotemporal dynamics of \eqref{eq:m}--\eqref{eq:M}. First we consider the case when $d_{1}=0.01$ and $d_{2}=0.0001$ with other parameters fixed as in Table~\ref{table:t1}. Fig.~\ref{fig:pde_plot2} shows the numerical results for three values of $b$. It is worth noting that the spatiotemporal patterns observed are similar to those in Fig.~\ref{fig:pde_plot1}a-c for the same values of $b$ except that the counter-propagating pulses initiated at the center of the domain spread quickly to the boundary. For example, in Fig.~\ref{fig:pde_plot1}b the primary and secondary propagating pulses reached the boundary at time t=2000s whereas in Fig.~\ref{fig:pde_plot2} (b), it takes lesser time for the propagating pulses to reach the boundary. For example, the primary pulses`   reached the boundary at time t=900s.
\begin{figure*}[htbp!]
\centering
\begin{subfigure}{.35\textwidth}
  \centering
  \includegraphics[width = \textwidth]{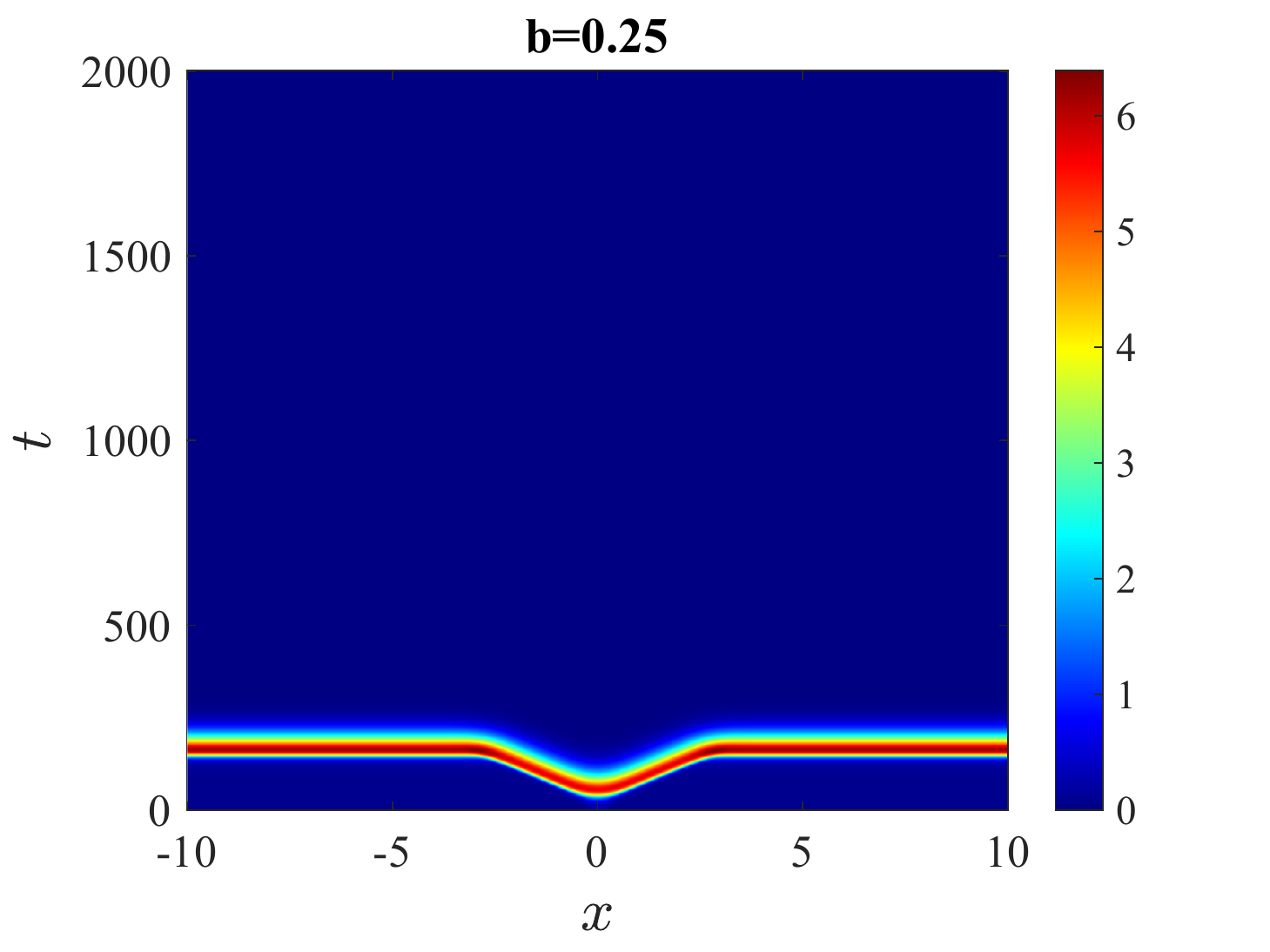}
  \caption{}
  \label{fig:b=0.25-gaussa}
\end{subfigure}%
\begin{subfigure}{.35\textwidth}
  \centering
  \includegraphics[width = \textwidth]{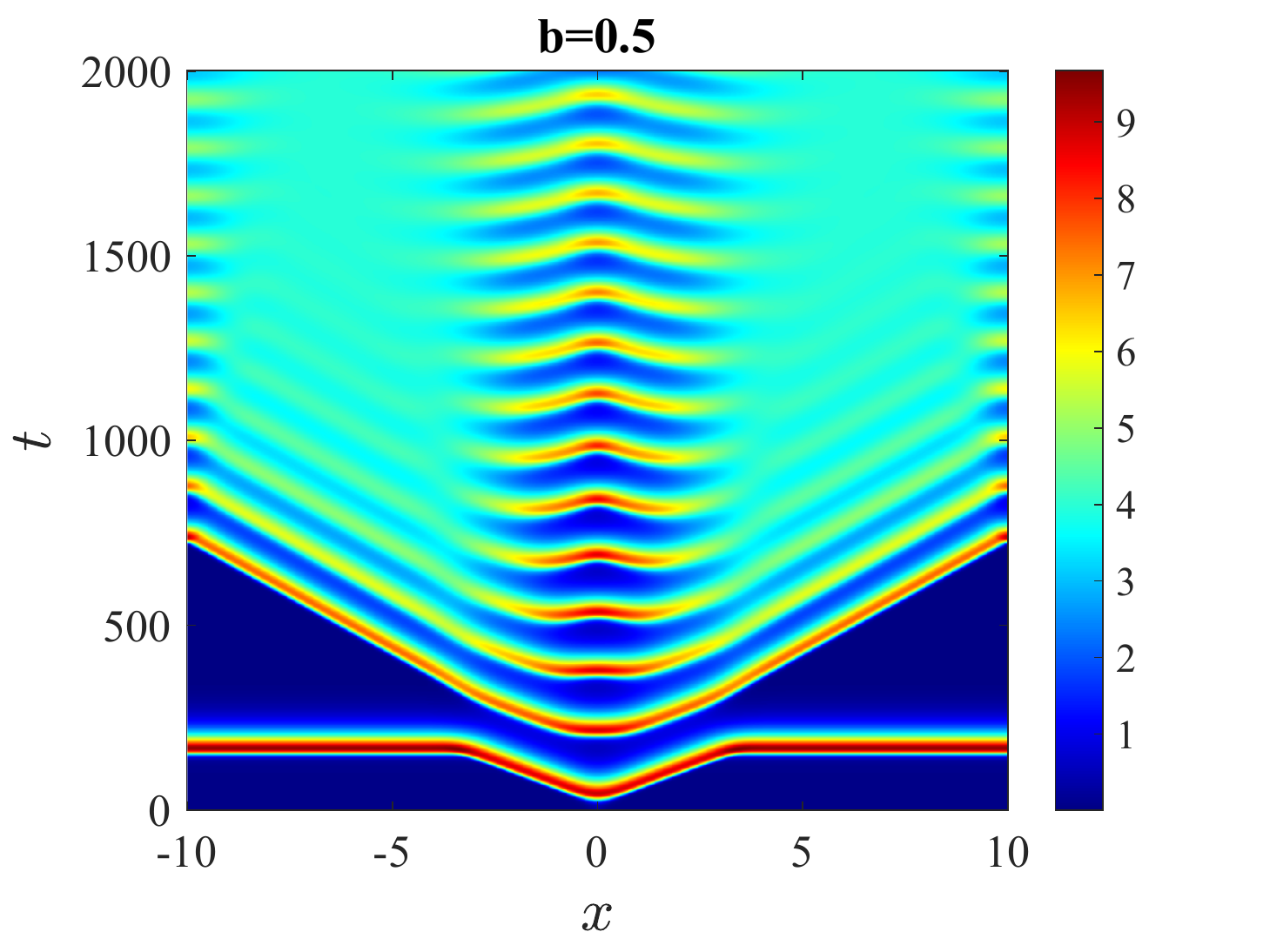}
  \caption{}
  \label{fig:b=0.5-gaussa}
\end{subfigure}%
\begin{subfigure}{.35\textwidth}
  \centering
  \includegraphics[width = \textwidth]{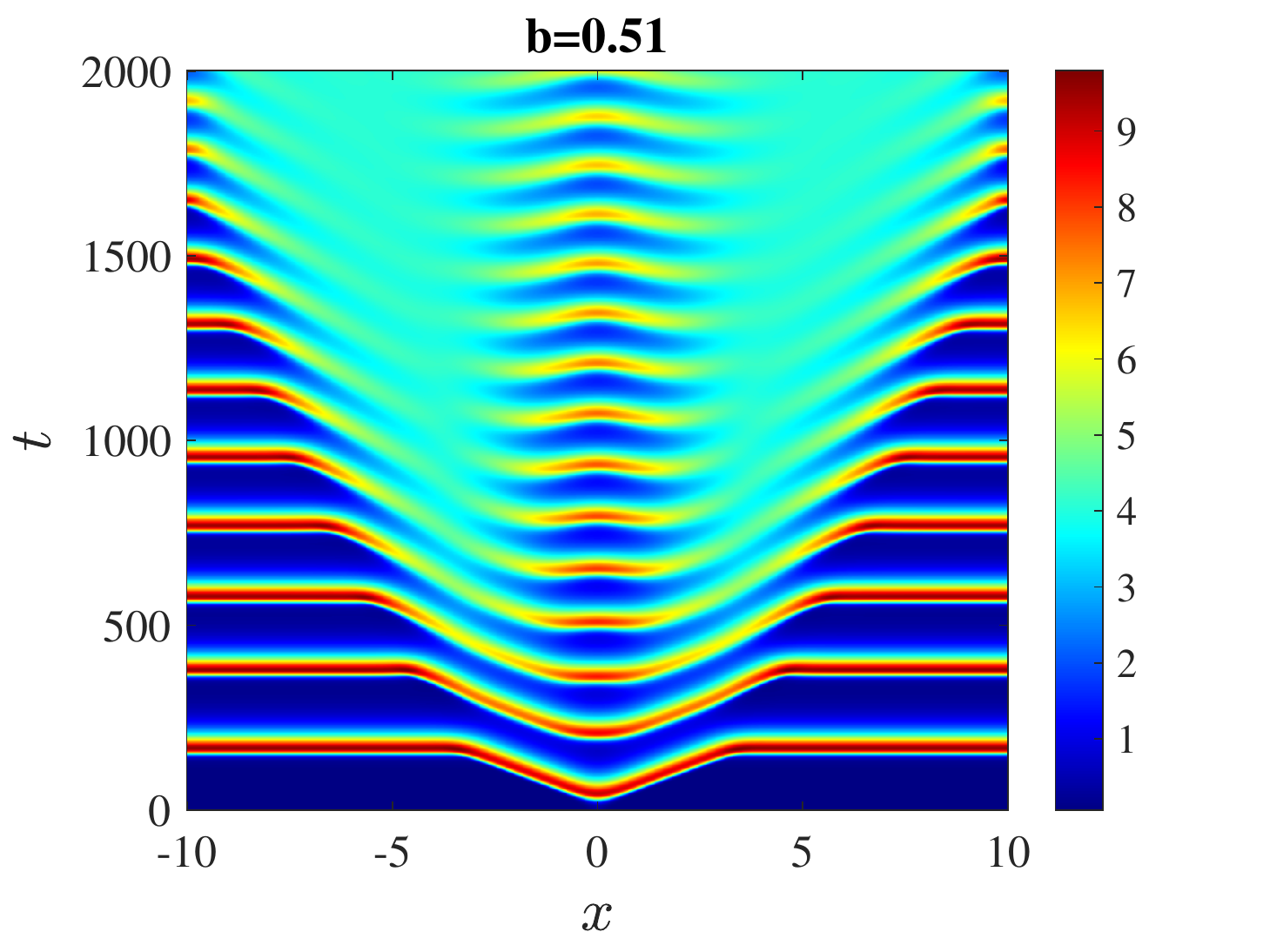}
  \caption{}
  \label{fig:b=0.51-gaussa}
\end{subfigure}\\%
\caption{Space-time plots of the $m$ for values of $b$. Specifically (a) $0.25$; (b) $0.5$; (c) $0.51$. The initial condition is $\eqref{eq:initial}$ with perturbation function $F(x)$ in $\eqref{eq:gauss}$, $d_{1}=0.01$, $d_{2}=0.0001$ and all other parameters are fixed as in Table \ref{table:t1}}
\label{fig:pde_plot2}
\end{figure*}


Lastly, setting $d_{1}=0.1$, $d_{2}=0.001$ produces the spatiotemporal patterns in Fig.~\ref{fig:pde_plot3}. All the solutions are oscillatory in time and homogeneous in space across the entire domain. For small $b$, a single pulse is observed for a short time before the system go back to the homogeneous steady state, see Fig.~\ref{fig:pde_plot3}a. Fig.~\ref{fig:pde_plot3} shows the system is synchronised with stable temporal oscillations. This corresponds to the stable limit cycle observed in the bifurcation diagram for the uncoupled cell in Fig.~\ref{fig:limitcycle}. For large $b$, temporal oscillations with amplitude decreasing as time progresses are observed. A typical example is shown in Fig.~\ref{fig:pde_plot3}c.
\begin{figure*}[htbp]
\centering
\begin{subfigure}{.35\textwidth}
  \centering
  \includegraphics[width = \textwidth]{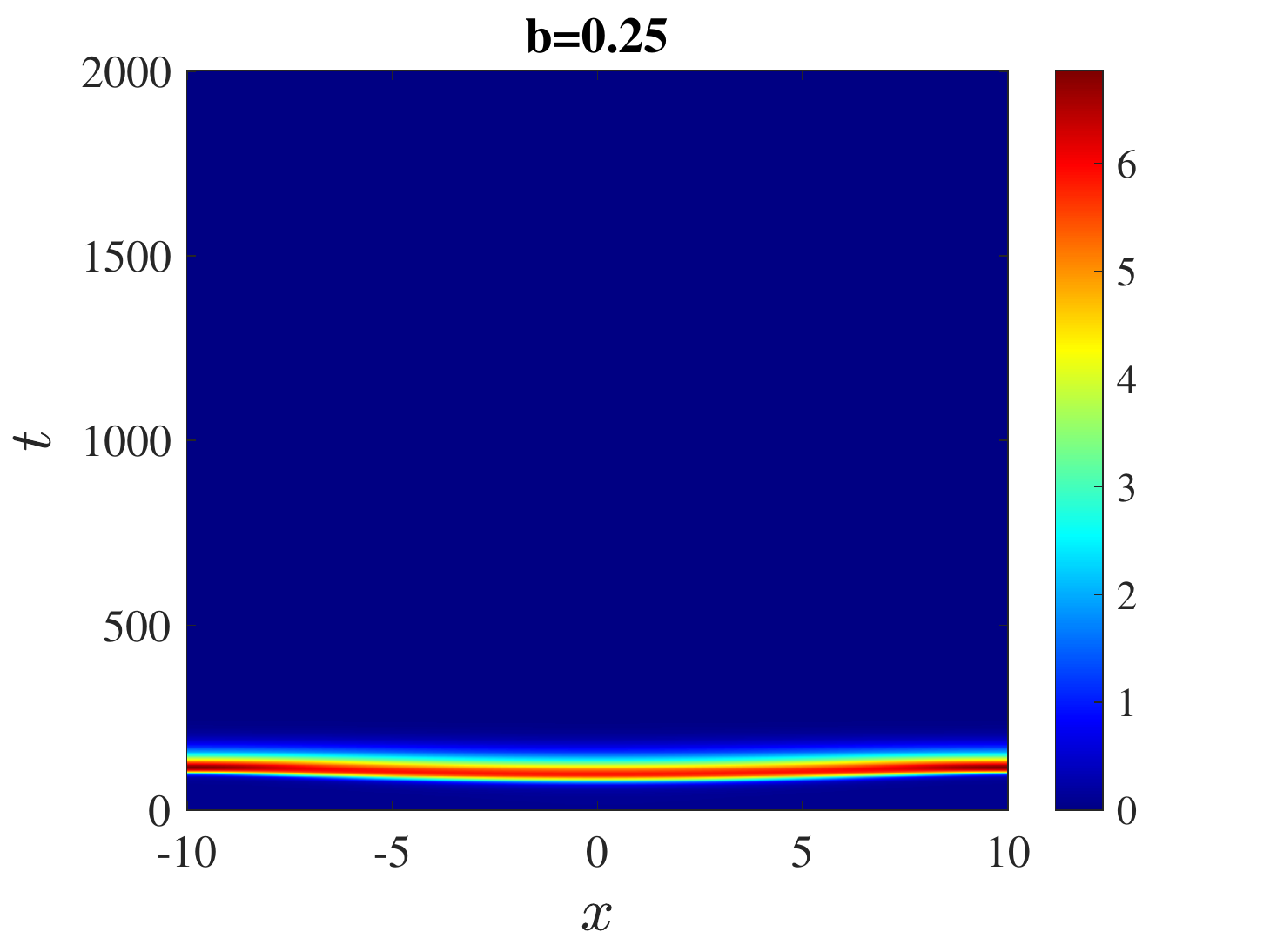}
  \caption{}
  \label{fig:b=0.25-gaussc}
\end{subfigure}%
\begin{subfigure}{.35\textwidth}
  \centering
  \includegraphics[width = \textwidth]{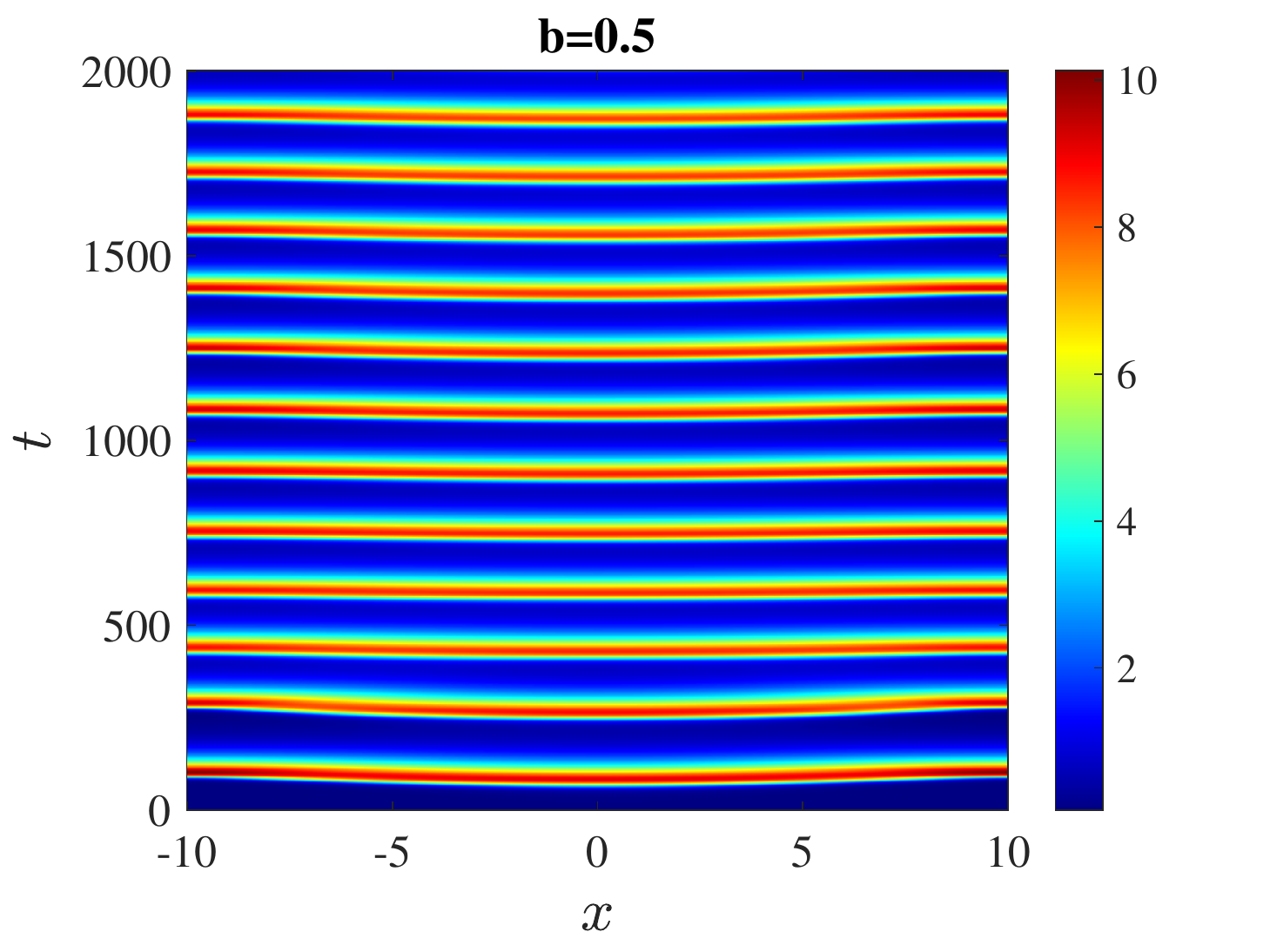}
  \caption{}
  \label{fig:b=0.5-gaussc}
\end{subfigure}%
\begin{subfigure}{.35\textwidth}
  \centering
  \includegraphics[width = \textwidth]{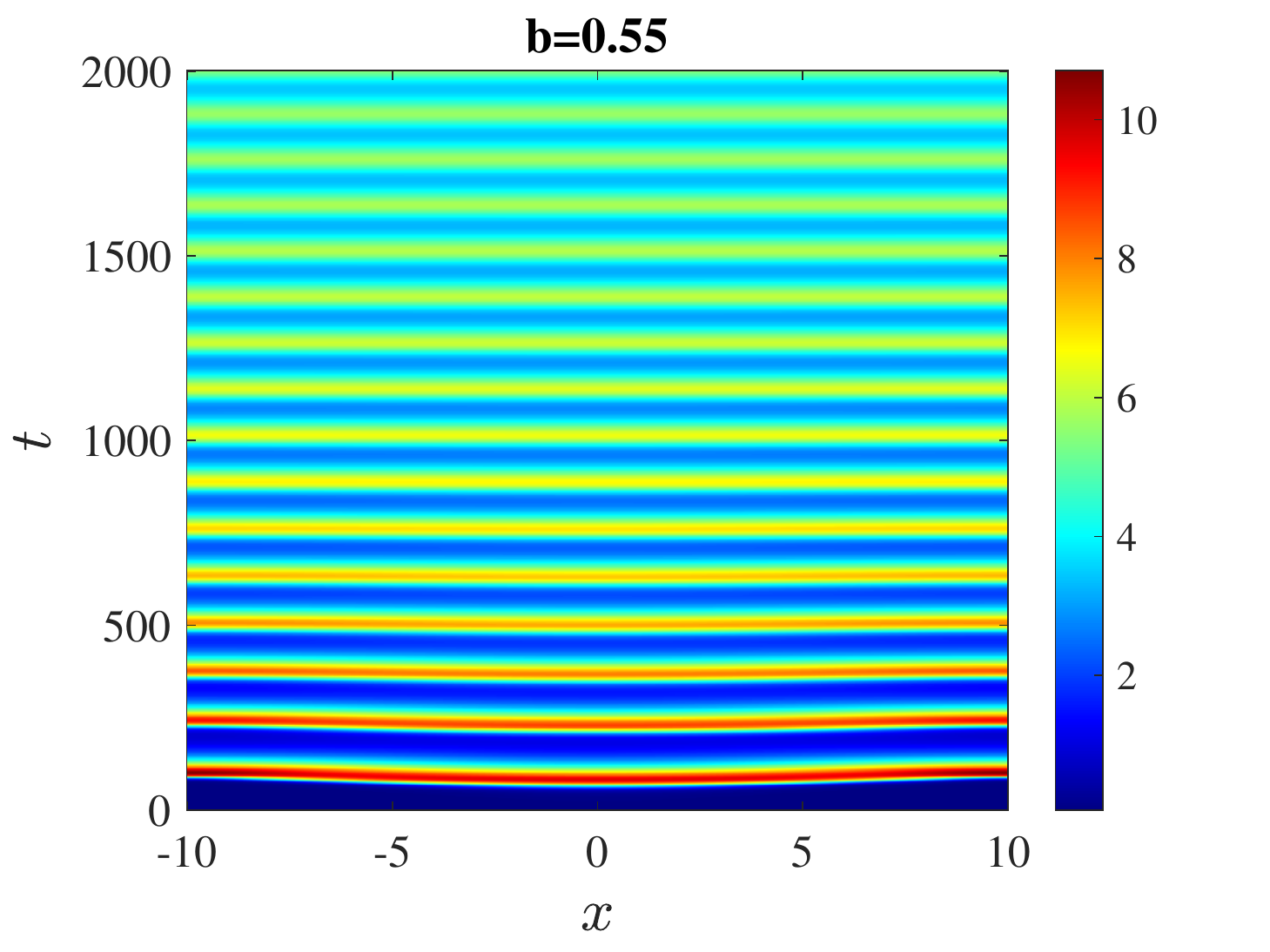}
  \caption{}
   \label{fig:b=0.55-gaussc}
\end{subfigure}\\%
\caption{Space-time plots of the $m$ for values of $b$. Specifically (a) $0.25$; (b) $0.5$; (c) $0.55$. The initial condition is $\eqref{eq:initial}$ with perturbation function $F(x)$ in $\eqref{eq:gauss}$, $d_{1}=0.1$, $d_{2}=0.001$ and all other parameters are fixed as in Table \ref{table:t1}.}
\label{fig:pde_plot3}
\end{figure*}


\section{Physical interpretation} \label{s4}

In this section the physical interpretation of Figs.~\ref{fig:MSNd3}-\ref{fig:pde_plot3} is being provided.We considered one- and two- parameter bifurcation analysis with the  ingestion rate of oxidized LDL by macrophages, b and the intake rate of oxidized LDL, d as bifurcating parameters. 

In Figs.~\ref{fig:MSNd3}-\ref{fig:2par2} we have observed that when $b\geq1$, the system depicts a stable nature. However for $b<1$ multiple cases have been investigated in Figs.~\ref{fig:MSNd3}-\ref{fig:2par2}. It suggests that the rate of ingestion of oxidized LDL by macrophages triggers larger plaque due to higher blood velocity ($b<1$). So the pertinent range of parameter b is $b>1$. Phase portraits for several values of the parameter b is provided in Fig.~\ref{fig:TT_Pplane}.  Fig.~\ref{fig:TT_Pplane}(a) is suggesting the stable nature phenomenon of the system for $b=1.5$.

We have observed unstable node in Fig.~\ref{fig:MSNd3}(a) (red colour zone), Fig.~\ref{fig:One-parameter bifurcation}(a) and Fig.~\ref{fig:One-parameter bifurcation}(b) for $0.03059 < b < 0.2314$. The region demonstrates that the ingestion rate of macrophages that are big enough to exhaust oxidized LDL in the plaque undergoes extension over a small period of time. After this period the plaque stabilizes again.

In Fig.~\ref{fig:2par1}(a) two stable equilibria and one unstable limit cycle is found in region II. The appearance of stable focus suggests that the plaque may evolve perpetually in accordance with the initial conditions. The plaque grows indefinitely in the presence of oscillation in the concentration of plaque ingredients while keeping the initial conditions of monocytes and macrophages within the region of the focus. A limited plaque growth is observed due to a short-term oscillations in the concentration of plaque ingredients, while the initial conditions for monocytes and macrophages are assumed to be outside the zone of attraction.

Fig.~\ref{fig:2par2} depicts that as the intake rate of oxidized LDL $d$ increases, the volume of the plaque surges.

Figs.~\ref{fig:pde_plot1}--\ref{fig:pde_plot3} for the PDE model shows the existence of travelling waves in the system. It validates the fact that the chronic inflammation in atherosclerosis is a result of propagation of reaction-diffusion waves.  Fig.~\ref{fig:pde_plot1} represents the counter - propagating pulses at different values of the parameter, b. It suggests that when the domain width is sufficiently large the wave propagation occurs near the surfaces due to the presence of excess amount of monocytes. Fig.~\ref{fig:pde_plot2} shows the existence of travelling waves for different values of b while keeping the diffusion parameters as $d_1= 0.01$ and $d_2= 0.0001$.  The diffusion parameter values are increased from   Fig.~\ref{fig:pde_plot1} to Fig.~\ref{fig:pde_plot2} and hence the monocyte concentration at the boundaries are significantly perturbed. In Fig. \ref{fig:pde_plot2} the counter propagating pulses are reaching the boundaries way faster than in Fig. \ref{fig:pde_plot1} which is a clear indication of the presence of excess monocytes at the boundaries. In Fig.~\ref{fig:pde_plot3} the values of the diffusion parameters are further increased to $d_1= 0.1$ and $d_2= 0.001$. It validates the temporal oscillations observed with respect to b in Fig.~\ref{fig:limitcycle}.

\section{Conclusion} \label{s5}

The advancement of atherosclerosis is being investigated here by deploying several tools from dynamical system theory. The role of macrophage phagocytosis in the plaque forming process is explored in terms of a temporal and a spatiotemporal model. A detailed linear stability analysis has been performed for the temporal model. Two kinds of bifurcation analysis are being considered, namely, one parametric and two parametric. The ingestion rate of oxidized LDL to macrophages $b$ and the intake rate of oxidized LDL $d$ are considered as the bifurcating parameters. We have shown the spatiotemporal model is dissipative in nature. It depicts the existence of bistability in the system. Numerical investigation on the spatiotemporal model validates the fact that $b>1$ is the ideal parametric range.

The results presented here show that there is a period of time when the plaque components oscillate at the onset of atherosclerosis. After this period, the plaque evolution process accelerates significantly. The bifurcation analysis is helpful in recognizing parameter regimes which are clinically significant. The results can be communicated with the clinical researchers for further investigations and  generating suitable therapeutic strategies in controlling the disease dynamics.

\bibliographystyle{spbasic}      

\bibliography{referbio.bib}

\end{document}